\documentclass[reqno]{amsart}
\usepackage[english]{babel}
\usepackage{graphicx,subfigure}
\usepackage{amsmath,amssymb}
\usepackage{appendix}
\usepackage{color}
\definecolor{oneblue}{rgb}{0,0.0,0.75}

\usepackage{fancyhdr}

\makeatletter
\newcommand{\sech}{\mathop{\operator@font sech}}
\newcommand{\sign}{\mathop{\operator@font sign}}
\makeatother

\newtheorem{theorem}{Theorem}[section]
\newtheorem{proposition}{Proposition}[section]

\newtheorem{remark}{Remark}[section]
\numberwithin{equation}{section}

\begin{document}

\title[]{A numerical method for two-dimensional Boussinesq-Full Dispersion systems modelling internal wave propagation}


\author[A. Duran]{Angel Duran}
\address{ Applied Mathematics Department,  University of
Valladolid, 47011 Valladolid, Spain}
\email{angeldm@uva.es}


\subjclass[2010]{65M70 (primary), 76B15, 76B25 (secondary)}
\keywords{Internal waves, Boussinesq Full Dispersion systems, spectral methods, error estimates}


\begin{abstract}
The paper is concerned with the numerical approximation of some two-dimensional asymptotic models for the propagation of internal waves in a two-layer system in the form of pde's for the interfacial deviation and the velocity. After the study of well-posedness of the periodic initial-value problem, the convergence of a semidiscrete approximation based on a spectral Fourier-Galerkin method is analyzed. The resulting semidiscrete system is then integrated numerically in time with the implicit midpoint rule.  Some properties of the fully discrete scheme are described and its performance is illustrated with some numerical experiments.
\end{abstract}

\maketitle

\section{Introduction}
The purpose of the paper is the introduction and analysis of a numerical method to approximate some internal wave models in two dimensions, derived in \cite{BLS2008} (see also \cite{S}), of the form
\begin{eqnarray}
(Id-b\Delta)\zeta_{t}+\frac{1}{\gamma}\nabla\cdot\left((1-\zeta){\bf v}_{\beta}\right)-\frac{1}{\gamma^{2}}|D|{\rm coth}(|D|)\nabla\cdot{\bf v}_{\beta}\nonumber&&\\
+\frac{1}{\gamma}\left(a-\frac{1}{\gamma^{2}}{\rm coth}^{2}(|D|)\right)\Delta\nabla\cdot{\bf v}_{\beta}&=&0,\label{BFD1a}\\
(Id-d\Delta)({\bf v}_{\beta})_{t}+(1-\gamma)\nabla\zeta-\frac{1}{2\gamma}\nabla |{\bf v}_{\beta}|^{2}+(1-\gamma)c\Delta\nabla\zeta&=&0.\label{BFD1b}
\end{eqnarray}
The system (\ref{BFD1a}), (\ref{BFD1b}) requires a detailed description in terms of modelling and mathematically. It is an approximation to the wave propagation in two dimensions along the interface of a two-layer system of inviscid, incompressible, homogeneous fluids with depths $d_{j}$ and densities $\rho_{j}, j=1,2$, with $\rho_{1}<\rho_{2}$, \cite{Lannes2,Duchene2021}. The top of the upper layer is not free, but an impenetrable bounding surface (rigid-lid condition) while the bottom of the lower layer is rigid as well, and flat. Defining
\begin{eqnarray*}
\gamma=\frac{\rho_{1}}{\rho_{2}}<1,
\end{eqnarray*}
as the density ratio, the function $\zeta=\zeta(x,y,t)$ in (\ref{BFD1a}), (\ref{BFD1b}) represents the interfacial deviation at $(x,y)$, from some rest position, at time $t$. The vector field ${\bf v}_{\beta}$ is of the form ${\bf v}_{\beta}=(Id-\beta\Delta)^{-1}{\bf v}$, where $\Delta$ is the Laplace operator, $\beta\geq 0$ is a modelling parameter, and ${\bf v}={\bf v}(x,y,t)$ represents the two components of the horizontal velocity of the wave. The usual symbol $\nabla$ denotes the gradient operator (in the horizontal $(x,y)$ variables), while the symbol $|D|$ is defined as $|D|=(-\Delta)^{1/2}$ and, using Fourier multiplier notation, is associated to the Fourier multiplier
\begin{eqnarray*}
\widehat{\left(|D|f\right)}({\bf k})=|{\bf k}|\widehat{f}({\bf k}),\quad {\bf k}\in \mathbb{R}^{2},
\end{eqnarray*}
being $\widehat{f}({\bf k})$ the Fourier transform of $f\in L^{2}(\mathbb{R}^{2})$ at ${\bf k}=(k_{x},k_{y})$ and $|{\bf k}|=\sqrt{k_{x}^{2}+k_{y}^{2}}$. In addition, the system (\ref{BFD1a}), (\ref{BFD1b}) is four-parameter, with constants $a,b,c,d$ satisfying, \cite{BLS2008}
\begin{eqnarray*}
a+b+c+d=\frac{1}{3}.
\end{eqnarray*}
The corresponding Euler equations for the two-layer model are shown to be consistent (in the sense precised in \cite{BLS2008}) with (\ref{BFD1a}), (\ref{BFD1b}) under the so-called Boussinesq-Full Dispersion (B-FD) physical regime. This can be described from the dimensionless parameters
\begin{eqnarray*}
\epsilon_{j}=\frac{a}{d_{j}},\quad \mu_{j}=\frac{d_{j}^{2}}{\lambda^{2}},\quad j=1,2,
\end{eqnarray*}
where $a$ and $\lambda$ are, respectively, the typical elevation and horizontal wavelength of the wave. The parameters $\epsilon_{j}, j=1,2$, account for the nonlinear effects on the model with respect to the $j$th layer, while $\mu_{j}, j=1,2$, measure the dispersive, linear effects. The B-FD regime is then defined by the conditions
\begin{eqnarray*}
\mu_{1}\sim\epsilon_{1}<<1,\quad \epsilon_{2}<<1,\quad \mu_{2}\sim 1,
\end{eqnarray*}
and the consistency of Euler's system has a precision $O(\epsilon^{3/2})$.

Linear well-posedness of the initial-value problem (ivp) for (\ref{BFD1a}), (\ref{BFD1b}) is aso established in \cite{BLS2008} from the dispersion relation
\begin{eqnarray}
\omega^{2}&=&|{\bf k}|^{2}(1-\gamma)(1-c|{\bf k}|^{2})\frac{g({\bf k})}{(1+b|{\bf k}|^{2})(1+d|{\bf k}|^{2})},\nonumber\\
g({\bf k})&=&\frac{1}{\gamma}\left(1-\frac{1}{\gamma}|{\bf k}|{\rm coth}|{\bf k}|-|{\bf k}|^{2}\left(a-\frac{1}{\gamma^{2}}{\rm coth}^{2}|{\bf k}|\right)\right),\label{BFD1d}
\end{eqnarray}
under the conditions $b,d\geq 0, a,c\leq 0$. On the other hand, nonlinear well-posedness is analyzed in \cite{A}. The results are summarized in Table \ref{BFD_t1}, in terms of the sign of the parameters $a,b,c,d$ and under linear well-posedness assumption (admissible systems), along with the corresponding Sobolev space where existence and uniqueness of solution of the ivp, locally in time, holds. 

\begin{table}[htbp]
\begin{tabular}{|c|c|c|c|c|c|}
    \hline
No.&$b$&$d$&$a$&$c$&Nonlinear well-posedness\\\hline\hline
1&$+$&$+$&$-$&$-$&$H^{s}\times H^{s}\times H^{s}, s>0$ \\\hline
2&$+$&$+$&$-$&$0$&$H^{s-1}\times H^{s}\times H^{s}, s>0$\\\hline
3&$+$&$+$&$0$&$-$&$H^{s}\times H^{s}\times H^{s}, s>0$ \\\hline
4&$+$&$+$&$0$&$0$&$H^{s-1}\times H^{s}\times H^{s}, s>0$\\\hline
5&$+$&$0$&$-$&$-$&$H^{s+1}\times H^{s}\times H^{s}, s>2$ \\\hline
6&$+$&$0$&$-$&$0$&$H^{s}\times H^{s}\times H^{s}, s>2$ \\\hline
7&$+$&$0$&$0$&$-$&$H^{s+1}\times H^{s}\times H^{s}, s>2$\\\hline
8&$+$&$0$&$0$&$0$&$H^{s}\times H^{s}\times H^{s}, s>2$\\\hline
9&$0$&$+$&$-$&$-$&$H^{s}\times H^{s+1}\times H^{s+1}, s>1$ \\\hline
10&$0$&$+$&$-$&$0$&$H^{s}\times H^{s+2}\times H^{s+2}, s>1$ \\\hline
11&$0$&$+$&$0$&$-$&$H^{s}\times H^{s+1}\times H^{s+1}, s>1$\\\hline
12&$0$&$+$&$0$&$0$&$H^{s}\times H^{s+2}\times H^{s+2}, s>1$\\\hline\hline
\end{tabular}
\caption{B-FD systems: Nonlinear well-posedness theory, \cite{A}.}
\label{BFD_t1}
\end{table}
As far as the numerical approximation of (\ref{BFD1a}), (\ref{BFD1b}) is concerned, the only reference that we are aware of is \cite{DDS3}, for the one-dimensional case, and the present paper extends some of its results to the 2D case, as explained below. As for the approximations of systems of Boussinesq type, for surface and internal wave models, Galerkin-Finite Elements discretizations in space with high-order, explicit Runge-Kutta (RK) methods in time are proposed to approximate some initial-boundary-value problems (ibvp) for Boussinesq systems for surface waves in \cite{AD2,ADM1,DMS2007,DMS3}. Error estimates for the semidiscrete (in space) schemes are derived. In 1D, the use of spectral methods, of Galerkin or collocation type, to discretize in space, with high-order explicit RK or Diagonally Implicit RK of composition type as time integrators, is proposed and analyzed in \cite{XRAA,DDS0}. For the 2D case, in the recent work \cite{D2026}, a spectral Fourier-Galerkin method for the periodic ivp of systems of Boussinesq-Boussinesq type for internal waves is introduced and the convergence is proved for several well-posed systems. The Implicit Midpoint Rule (IMR) is proposed as time integrator and the performance of the full discretization is illustrated with some numerical experiments. The aim of the present paper is the extension of the analysis of this scheme for the B-FD system  (\ref{BFD1a}), (\ref{BFD1b}). In addition, the geometric character, as time integrator, of the IMR, is also used in \cite{BuliX2018}, where for the 1D periodic ivp of Boussinesq BBM-BBM system for surface waves, a scheme based on a local discontinous Galerkin  methods, with two different choices of numerical fluxes, is introduced. The accuracy and well behaviour in long-term simulations of fully discrete schemes with IMR and a high-order Hamiltonian dissipative Strong Stability Preserving RK method are tested and compared numerically.

As mentioned before, two main references, \cite{DDS3,D2026}, are in the origin of the present papr and determine the structure and contents in the following sense:
\begin{itemize}
\item In order to investigate the use of spectral methods to simulate in 2D pde systems for internal wave propagation, the results obtained in \cite{D2026} for the 2D Boussinesq/Boussinesq systems provide a motivation to analyze the same approach to approximate the periodic ivp of (\ref{BFD1a}), (\ref{BFD1b}). The main difference here is the presence of nonlocal terms which, via the Fourier representation, justify the spectral approach and provide an insight into the analysis of convergence and general performance of the approximation.
\item In order to study the convergence of the spectral discretization in space, introduced in the present paper, a comparison with the results in 1D given in \cite{DDS3} must be made. We analyze the same groups of B-FD systems considered in 1D, all within the so-called weakly dispersive case, that is, when $b,d> 0$, namely:
\begin{itemize}
\item[(G1)] The BBM-BBM B-FD case: $a=c=0$.
\item[(G2)] The generic B-FD case: $a,c<0$.
\item[(G3)] B-FD systems with $a<0, c=0$ or $a=0,c<0$.
\end{itemize}
According to the results presented in \cite{DDS3} and in this paper, in the case of (G1), the error estimates are similar in 1D and 2D, but with some differences: in both cases, spectral convergence is attained for smooth exact solutions, although the 2D case requires more regularity from the beginning. We also emphasize that in the 2D case, the estimate is of optimal order for the approximation to both the interfacial deviation and the velocity in $L^{2}$ but suboptimal for the velocity components in $H^{1}$ error. The cases  (G2) and (G3) when $a=0, c<0$ show the most relevant discrepancies. For 1D, the error estimates are optimal in $L^{2}$ norm for the two variables, while in 2D, requiring more regularity for the exact solution, they are optimal in $H^{1}$ norm for the three variables. Finally, in the case (G3) when $a<0, c=0$, the same type of estimate as in (G2) is obtained in 1D, while in 2D the error is still of optimal order in $H^{1}$ for the approximation to the velocities, but suboptimal in $L^{2}$ norm for the approximation to the interfacial wave.

We notice that, following similar strategies for the proofs in 1D and 2D, the disparity in the results explained above is mainly due to the differences between one and two dimensions in some estimates for the projection errors and inverse inequalities  in the space of trigonometric polynomials.
\item
The choice and implementation of the fully discrete scheme is mainly justified by the good results shown in \cite{D2026} for a different model of internal wave propagation. Selecting the IMR as time integrator ensures stability and geometric properties, \cite{HLW2004}, that are key aspects to the simulation of some models of the family (\ref{BFD1a}), (\ref{BFD1b}), specially in the generic case (G2), but requiring, however, some computational disadvantages, typically due to the diagonally implicit character of the integrator. Although, as we will see in the paper, the classical fixed-point algorithm for the internal stage is convergent under reasonable conditions for the stepsizes, the iterative procedure yields a slower performance. In this sense, we notice that in cases of BBM-BBM type (G1), explicit time discretizations may be used as alternative, \cite{CDM2026}.
\end{itemize}
The structure of the paper is as follows. In section \ref{sec2}, the periodic ivp for  (\ref{BFD1a}), (\ref{BFD1b}) is introduced and from the study in \cite{A} for the ivp, well-posedness of the periodic problem is analyzed. Section \ref{sec3} is devoted to the introduction of the spectral discretization, the description of some conservation properties, and the derivation of the error estimates. In section \ref{sec4}, the Galerkin semidiscretization is reformulated in collocation form and then integrated in time with the IMR. A complete description of the implementation is made and some numerical experiments illustrating the accuracy and general performance of the full discretization are shown.

We describe the main notation used throughout the paper. Let $\Omega\subset\mathbb{R}^{2}$ be a domain. The usual inner product in $L^{2}=L^{2}(\Omega)$ is denoted by $(\cdot,\cdot)$, with the associated norm given by $||\cdot||$. In the case of the product space $L^{2}\times L^{2}$, we will change the notation to $\langle\cdot,\cdot\rangle$, meaning, for ${\bf u}, {\bf v}\in L^{2}\times L^{2}$
\begin{eqnarray*}
\langle {\bf u}, {\bf v}\rangle=(u_{1},v_{1})+(u_{2},v_{2}),\quad {\bf u}=(u_{1},u_{2}), {\bf v}=(v_{1},v_{2}).
\end{eqnarray*}
Let also $|\cdot|_{\infty}$ be the norm in the Banach space $L^{\infty}=L^{\infty}(\Omega)$.

For $s\geq 0$, $H^{s}=H^{s}(\Omega)$ will denote the $L^{2}$-based Sobolev space on $\Omega$ of order $s$ (where $H^{0}:=L^{2}$) and usual norm $||\cdot||_{s}$. For the sake of simplicity, the same notation for the norms will be used when dealing with product spaces $H^{s}\times H^{s}$ and $L^{\infty}\times L^{\infty}$, with
\begin{eqnarray*}
||{\bf u}||_{s}=(||u_{1}||_{s}^{2}+||u_{2}||_{s}^{2})^{1/2},\; |{\bf u}|_{\infty}=\max\{|u_{1}|_{\infty},|u_{2}|_{\infty}\},\; {\bf u}=(u_{1},u_{2}).
\end{eqnarray*}

If $T>0$ and $k\geq 0$ is an integer, $C^{k}(0,T,H^{s}), s\geq 0$ will stand for the space of $k$th-order continuously differentiable functions $u:[0,T]\rightarrow H^{s}$.

The symbol $f\lesssim g$ stands for the inequality $f\leq C g$, for some constant $C$. The dependence of $C$ on the data and parameters of the problem will be specified in each case. 
\section{The periodic ivp for (\ref{BFD1a}), (\ref{BFD1b})}
\label{sec2}
In this section we analyze the well-posedness of the periodic ivp of the B-FD systems (\ref{BFD1a}), (\ref{BFD1b}), with period $2\pi$ in both variables for simplicity. These will be written in the form
\begin{eqnarray}
\partial_{t}\zeta+\frac{1}{\gamma}R_{b}\cdot\left((1-\zeta){\bf u}\right)-\frac{1}{\gamma^{2}}\widetilde{S}_{1}\cdot{\bf u}+\frac{1}{\gamma}\widetilde{S}_{2}(a)\cdot{\bf u}&=&0,\label{BFD21a}\\
\partial_{t}{\bf u}+(1-\gamma)\widetilde{R}_{d}(c)\zeta-\frac{1}{2\gamma}R_{d}|{\bf u}|^{2}&=&0,\label{BFD21b}
\end{eqnarray}
for $2\pi$-periodic, real functions $\zeta=\zeta(x,y,t), {\bf u}=(u_{1}(x,y,t),u_{2}(x,y,t)), 0\leq t\leq T, (x,y)\in\Omega=(0,2\pi)^{2}$, and $2\pi$-periodic, real functions as initial conditions
\begin{eqnarray}
\zeta(x,y,0)= \zeta_{0}(x,y),\quad {\bf u}(x,y,0)={\bf u}^{0}(x,y)=(u_{1}^{0}(x,y),u_{2}^{0}(x,y)).\label{BFD21c}
\end{eqnarray}
In (\ref{BFD21a}), (\ref{BFD21b}) some nonlocal operators acting on the Sobolev spaces $H^{s}, s\in\mathbb{R}$, are introduced. They are described in Table \ref{BFD_t2}, along with their corresponding Fourier symbol, \cite{DDS3}, where ${\bf k}=(k_{x},k_{y})$ and $|{\bf k}|=\sqrt{k_{x}^{2}+k_{y}^{2}}$.
\begin{table}[ht]
\centering
\begin{tabular}{|c|c|}
    \hline
Operator&Fourier symbol\\\hline\hline
$T_{b}=(Id-b\Delta)^{-1}:H^{s-2}\rightarrow H^{s}$&$1/(1+b|{\bf k}|^{2})$\\\hline
$T_{d}=(Id-d\Delta)^{-1}:H^{s-2}\rightarrow H^{s}$&$1/(1+d|{\bf k}|^{2})$\\\hline
$T_{c}=(Id+c\Delta):H^{s}\rightarrow H^{s-2}$&$1-c|{\bf k}|^{2}$\\\hline
$S_{1}=|D|{\rm coth}{|D|}:H^{s}\rightarrow H^{s-1}$&$\tau_{1}({\bf k})=|{\bf k}|{\rm coth}{|{\bf k}|}$\\\hline
$R_{b}=T_{b}\nabla$&$\left(\frac{ik_{x}}{1+b|{\bf k}|^{2}},\frac{ik_{y}}{1+b|{\bf k}|^{2}}\right)$\\\hline
$R_{d}=T_{d}\nabla$&$\left(\frac{ik_{x}}{1+d|{\bf k}|^{2}},\frac{ik_{y}}{1+d|{\bf k}|^{2}}\right)$\\\hline
$\widetilde{R}_{d}(c)=T_{d}T_{c}\nabla$&$\left(\frac{ik_{x}(1-c|{\bf k}|^{2})}{1+d|{\bf k}|^{2}},\frac{ik_{y}(1-c|{\bf k}|^{2})}{1+d|{\bf k}|^{2}}\right)$\\\hline
$\widetilde{S}_{1}=T_{b}S_{1}\nabla$&$\left(\frac{ik_{x}\tau_{1}({\bf k})}{1+b|{\bf k}|^{2}},\frac{ik_{y}\tau_{1}({\bf k})}{1+b|{\bf k}|^{2}}\right)$\\\hline
$\widetilde{S}_{2}(a)=T_{b}(a\Delta+\frac{1}{\gamma^{2}}S_{1}^{2})\nabla$&$\left(\frac{ik_{x}(-a|{\bf k}|^{2}+\frac{1}{\gamma^{2}}\tau_{1}({\bf k})^{2})}{1+b|{\bf k}|^{2}},\frac{ik_{y}(-a|{\bf k}|^{2}+\frac{1}{\gamma^{2}}\tau_{1}({\bf k})^{2})}{1+b|{\bf k}|^{2}}\right)$\\\hline
\hline
\end{tabular}
\caption{Operators defined in (\ref{BFD21a}), (\ref{BFD21b}) and corresponding Fourier symbols.}
\label{BFD_t2}
\end{table}

Some properties of the operators in Table \ref{BFD_t2}, derived from their Fourier representation, will be used in the sequel. As in the 1D case, the operators $T_{k}, k=b,d>0$, satisfy
\begin{eqnarray}
||T_{k}f||_{s}\lesssim||f||_{s-2},\quad s\in\mathbb{R},\quad f\in H^{s-2},\label{BFD22a}
\end{eqnarray}
while, for $c\leq 0$
\begin{eqnarray}
||T_{c}f||_{s-2}\lesssim||f||_{s},\quad s\in\mathbb{R},\quad f\in H^{s}.\label{BFD22b}
\end{eqnarray}
The symbol of $S_{1}$ behaves like $|{\bf k}|$ for $|{\bf k}|\geq 1$ and, from Taylor expansions, like $1+\frac{|{\bf k}|^{2}}{3}+O(|{\bf k}|^{4})$ for $|{\bf k}|$ small.This implies, when $a\leq 0$, that
\begin{eqnarray}
||\widetilde{S}_{1}f||_{s}&\lesssim&||f||_{s},\quad s\in\mathbb{R},\quad f\in H^{s}.\label{BFD22c}\\
||\widetilde{S}_{2}(a)f||_{s}&\lesssim&||f||_{s+1},\quad s\in\mathbb{R},\quad f\in H^{s+1}.\label{BFD22d}
\end{eqnarray}
Finally, for $k=b,d$
\begin{eqnarray}
||R_{k}f||_{s}&\lesssim&||f||_{s-1},\quad s\in\mathbb{R},\quad f\in H^{s-1},\label{BFD22e}
\end{eqnarray}
while
\begin{eqnarray}
||\widetilde{R}_{d}(c)f||_{s}&\lesssim&||f||_{s+1},\quad s\in\mathbb{R},\quad f\in H^{s+1},c<0,\label{BFD22f}\\
||\widetilde{R}_{d}(0)f||_{s}&\lesssim&||f||_{s-1},\quad s\in\mathbb{R},\quad f\in H^{s-1}.\label{BFD22g}
\end{eqnarray}
From Table \ref{BFD_t2} and (\ref{BFD21a}), (\ref{BFD21b}),, we can also derive the system for the Fourier coefficients of the variables $\zeta, {\bf u}$. This has the form
\begin{eqnarray}
\widehat{\zeta}_{t}+\frac{1}{\gamma}\left(\frac{ik_{x}}{1+b|{\bf k}|^{2}}\widehat{u_{1}}+\frac{ik_{y}}{1+b|{\bf k}|^{2}}\widehat{u_{2}}\right)-\frac{1}{\gamma}\left(\frac{ik_{x}}{1+b|{\bf k}|^{2}}\widehat{\zeta u_{1}}+\frac{ik_{y}}{1+b|{\bf k}|^{2}}\widehat{\zeta u_{2}}\right)&&\nonumber\\
-\frac{1}{\gamma^{2}}\left(\frac{ik_{x}\tau_{1}({\bf k})}{1+b|{\bf k}|^{2}}\widehat{u_{1}}+\frac{ik_{y}\tau_{1}({\bf k})}{1+b|{\bf k}|^{2}}\widehat{u_{2}}\right)&&\nonumber\\
+\frac{1}{\gamma}\left(\frac{ik_{x}(-a|{\bf k}|^{2}+\frac{1}{\gamma^{2}}\tau_{1}({\bf k})^{2})}{1+b|{\bf k}|^{2}}\widehat{u_{1}}+\frac{ik_{y}(-a|{\bf k}|^{2}+\frac{1}{\gamma^{2}}\tau_{1}({\bf k})^{2})}{1+b|{\bf k}|^{2}}\widehat{u_{2}}\right)=0,&&\label{BFD23a}\\
(\widehat{u_{1}})_{t}+(1-\gamma)\frac{ik_{x}(1-c|{\bf k}|^{2})}{1+d|{\bf k}|^{2}}\widehat{\zeta}-\frac{1}{2\gamma}\frac{ik_{x}}{1+d|{\bf k}|^{2}}\widehat{(u_{1}^{2}+u_{2}^{2})}=0,&&\label{BFD23b}\\
(\widehat{u_{2}})_{t}+(1-\gamma)\frac{ik_{y}(1-c|{\bf k}|^{2})}{1+d|{\bf k}|^{2}}\widehat{\zeta}-\frac{1}{2\gamma}\frac{ik_{y}}{1+d|{\bf k}|^{2}}\widehat{(u_{1}^{2}+u_{2}^{2})}=0,&&\label{BFD23c}
\end{eqnarray}
where for $f=\zeta, u_{1}, u_{2}$, $\widehat{f}=\widehat{f}({\bf k},t), {\bf k}=(k_{x},k_{y})\in\mathbb{Z}^{2}$ denotes the $k$th Fourier coefficient of $f(\cdot,t), t\geq 0$. The initial conditions for (\ref{BFD23a})-(\ref{BFD23c}) are
\begin{eqnarray}
\widehat{\zeta}({\bf k},0)=\widehat{\zeta_{0}}({\bf k}),\quad \widehat{{\bf u}}({\bf k},0)=\widehat{{\bf u}^{0}}({\bf k}),\quad {\bf k}\in\mathbb{Z}^{2}.\label{BFD23d}
\end{eqnarray}
In matrix form,  (\ref{BFD23a})-(\ref{BFD23c}) can be written as
\begin{eqnarray}
\frac{d}{dt}\begin{pmatrix}\widehat{\zeta}\\\widehat{u_{1}}\\\widehat{u_{2}}\end{pmatrix}({\bf k},t)+i|{\bf k}|\mathcal{A}({\bf k})\begin{pmatrix}\widehat{\zeta}\\\widehat{u_{1}}\\\widehat{u_{2}}\end{pmatrix}({\bf k},t)+i\mathcal{F}(\widehat{\zeta},\widehat{u_{1}},\widehat{u_{2}})({\bf k},t)=0,\label{BFD24a}
\end{eqnarray}
where 
\begin{eqnarray}
\mathcal{A}({\bf k})&=&\begin{pmatrix}0&\frac{k_{x}}{|{\bf k}|}\frac{g({\bf k})}{1+b|{\bf k}|^{2}}&\frac{k_{y}}{|{\bf k}|}\frac{g({\bf k})}{1+b|{\bf k}|^{2}}\\
(1-\gamma)\frac{k_{x}}{|{\bf k}|}\frac{(1-c|{\bf k}|^{2})}{1+d|{\bf k}|^{2}}&0&0\\
(1-\gamma)\frac{k_{y}}{|{\bf k}|}\frac{(1-c|{\bf k}|^{2})}{1+d|{\bf k}|^{2}}&0&0\end{pmatrix},\label{BFD24b}\\
\mathcal{F}(\widehat{\zeta},\widehat{{\bf u}})({\bf k},t)&=&\begin{pmatrix}\frac{1}{\gamma}\frac{k_{x}\widehat{(\zeta u_{1})}({\bf k},t)+k_{y}\widehat{(\zeta u_{2})}({\bf k},t)}{1+b|{\bf k}|^{2}}\\
\frac{1}{2\gamma}\frac{k_{x}}{1+d|{\bf k}|^{2}}\widehat{|{\bf u}|^{2}}({\bf k},t)\\
\frac{1}{2\gamma}\frac{k_{y}}{1+d|{\bf k}|^{2}}\widehat{|{\bf u}|^{2}}({\bf k},t)\end{pmatrix},\label{BFD24c}
\end{eqnarray}
and $g({\bf k})$ is defined in (\ref{BFD1d}). For the study of well-posedness of (\ref{BFD21a})-(\ref{BFD21c}), we will extend Theorem 2.1 of \cite{A} to the periodic case. The extension is direct, since the arguments for the proof make also use of the Contraction Mapping Theorem, from the Fourier representation of the solutions, based in this case on Fourier series, and (\ref{BFD23a})-(\ref{BFD23d}) or, alternatively,  (\ref{BFD24a})-(\ref{BFD24c}).
\begin{theorem}
\label{TH21}
Let $s>0$.
\begin{itemize}
\item[(i)] Assume that $b,d>0, a\leq 0, c<0$. Let $(\zeta_{0},{\bf u}^{0})\in H^{s}\times H^{s}\times H^{s}$. Then there exist $T>0$ and a unique solution $(\zeta,{\bf u})\in X_{T}:=C(0,T,H^{s})^{3}$ of (\ref{BFD21a})-(\ref{BFD21c}).
\item[(ii)] Assume that $b,d>0, a\leq 0, c=0$. Let $(\zeta_{0},{\bf u}^{0})\in H^{s-1}\times H^{s}\times H^{s}$. Then there exist $T>0$ and a unique solution $(\zeta,{\bf u})\in X_{T}:=C(0,T,H^{s-1})\times C(0,T,H^{s})^{2}$ of (\ref{BFD21a})-(\ref{BFD21c}).
\end{itemize}
\end{theorem}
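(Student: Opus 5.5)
We would prove both parts with the Contraction Mapping Theorem, applied to the Duhamel form of (\ref{BFD24a}). First we solve the linear part. For each ${\bf k}\in\mathbb{Z}^{2}$, ${\bf k}\neq 0$, the matrix $|{\bf k}|\mathcal{A}({\bf k})$ in (\ref{BFD24b}) has eigenvalues $0$ and $\pm\omega({\bf k})$, with $\omega$ as in (\ref{BFD1d}). Under $b,d>0$ and $a,c\leq 0$ the relation $\omega^{2}\geq 0$ holds, since $g({\bf k})>0$ for admissible systems. So the group $S(t)=\exp(-it|{\bf k}|\mathcal{A}({\bf k}))$ is defined mode by mode, and the solution is written as
\begin{eqnarray*}
\widehat{U}({\bf k},t)=S(t)\widehat{U}({\bf k},0)-i\int_{0}^{t}S(t-\tau)\mathcal{F}(\widehat{U})({\bf k},\tau)d\tau,
\end{eqnarray*}
where $U=(\zeta,u_{1},u_{2})$. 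The mode ${\bf k}=0$ is trivial, because $\mathcal{F}$ and $\mathcal{A}$ vanish there.

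The key linear step is to show that $S(t)$ is uniformly bounded in the relevant weighted norm. We would symmetrize $\mathcal{A}$ with the diagonal matrix $\mathrm{diag}(\alpha({\bf k}),1,1)$, with
\begin{eqnarray*}
\alpha({\bf k})^{2}=(1-\gamma)\frac{(1-c|{\bf k}|^{2})(1+b|{\bf k}|^{2})}{g({\bf k})(1+d|{\bf k}|^{2})}.
\end{eqnarray*}
Then $S(t)$ is unitary in the norm $\sum_{\bf k}(1+|{\bf k}|^{2})^{s}(\alpha^{2}|\widehat{\zeta}|^{2}+|\widehat{\bf u}|^{2})$. Next we need the growth of $g$. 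Since $\tau_{1}\sim|{\bf k}|$ for large $|{\bf k}|$, $g({\bf k})\sim(\gamma^{-3}-\gamma^{-1}a)|{\bf k}|^{2}$, which is of order $|{\bf k}|^{2}$ whether or not $a$ vanishes. Hence $\alpha\sim 1$ when $c<0$, and $\alpha\sim|{\bf k}|^{-1}$ when $c=0$. This gives uniform boundedness of $S(t)$ on $H^{s}\times H^{s}\times H^{s}$ in case (i), and on $H^{s-1}\times H^{s}\times H^{s}$ in case (ii). Both claims would be checked carefully, including near ${\bf k}=0$, using the Taylor behaviour of $\tau_{1}$ noted after (\ref{BFD22b}).

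For the nonlinear step we estimate $\mathcal{F}$ in these spaces. The velocity components are smoothed by $R_{d}$ through (\ref{BFD22e}), which gives $\|R_{d}|{\bf u}|^{2}\|_{s}\lesssim\||{\bf u}|^{2}\|_{s-1}$. The $\zeta$ component is smoothed by $R_{b}$, which gains one derivative. In case (i) we need $\|R_{b}\cdot(\zeta{\bf u})\|_{s}\lesssim\|\zeta{\bf u}\|_{s-1}$. In case (ii) we need $\|R_{b}\cdot(\zeta{\bf u})\|_{s-1}\lesssim\|\zeta{\bf u}\|_{s-2}$, and $\|\zeta{\bf u}\|_{s-2}\lesssim\|\zeta\|_{s-1}\|{\bf u}\|_{s}$.

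The main obstacle is proving these periodic product estimates in 2D for every $s>0$. These are bounds of the form $\|fg\|_{s-1}\lesssim\|f\|_{s}\|g\|_{s}$ and $\|fg\|_{s-2}\lesssim\|f\|_{s-1}\|g\|_{s}$, and they must hold even at low regularity, where negative indices appear. This is exactly the point that Theorem 2.1 of \cite{A} handles on $\mathbb{R}^{2}$. We would transfer it to $\mathbb{T}^{2}$ by a discrete Young or Cauchy–Schwarz argument on the convolution sum $\sum_{\bf m}\widehat{f}({\bf m})\widehat{g}({\bf k}-{\bf m})$, weighted by $\langle{\bf k}\rangle^{s-1}\langle{\bf m}\rangle^{-s}\langle{\bf k}-{\bf m}\rangle^{-s}$. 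We would split the sum into the regions where $|{\bf m}|$ or $|{\bf k}-{\bf m}|$ dominates, and use summability of $\langle{\bf m}\rangle^{-2\sigma}$ over $\mathbb{Z}^{2}$ for $\sigma>1$. If the low-regularity range causes trouble, we would mimic the duality argument of \cite{A} exactly, since sums over lattice points are controlled by the corresponding integrals.

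With these bilinear bounds the Duhamel map is a contraction on a ball of $X_{T}$ for $T$ small, with $T$ depending on the norm of the data. Continuity in time follows from the strong continuity of $S(t)$, and uniqueness from the Lipschitz estimate together with Gronwall's lemma.
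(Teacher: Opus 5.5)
You are following the paper's own route: the paper gives no details and simply asserts that the contraction-mapping proof of Theorem~2.1 of \cite{A} carries over to Fourier series. Your linear step is correct. Indeed $g>0$, since $1-y/\gamma+y^{2}/\gamma^{2}>0$ for $y=|{\bf k}|\coth|{\bf k}|$ and $a\leq 0$, and your symmetrizer makes the group unitary in a norm equivalent to $H^{s}\times H^{s}\times H^{s}$, resp.\ $H^{s-1}\times H^{s}\times H^{s}$. Case (i) is also fine: $\|fg\|_{s-1}\lesssim\|f\|_{s}\|g\|_{s}$ holds on $\mathbb{T}^{2}$ for every $s>0$. For $s\leq 1/2$, however, you must obtain it from H\"older, Sobolev and duality ($H^{s}\subset L^{2/(1-s)}$, $L^{1/(1-s)}\subset H^{s-1}$). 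The weighted Cauchy--Schwarz sum will not do, because it needs $\sum_{\bf m}\langle{\bf m}\rangle^{-4s}<\infty$.

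The genuine gap is in case (ii). The estimate it rests on, $\|\zeta{\bf u}\|_{s-2}\lesssim\|\zeta\|_{s-1}\|{\bf u}\|_{s}$, is false for $0<s<1/2$, on $\mathbb{T}^{2}$ as on $\mathbb{R}^{2}$, because the regularities of the two factors add up to $2s-1<0$. Take $\zeta=N^{1-s}\cos((N+1)x)$, $u_{1}=N^{-s}\cos(Nx)$, $u_{2}=0$; the shift by one is there because $R_{b}$ kills ${\bf k}=0$. Then $\|\zeta\|_{s-1}$ and $\|{\bf u}\|_{s}$ stay bounded uniformly in $N$. But $\zeta u_{1}=\frac{1}{2}N^{1-2s}(\cos x+\cos((2N+1)x))$, so $R_{b}\cdot(\zeta{\bf u})$ has Fourier coefficients of size $\sim N^{1-2s}$ at ${\bf k}=(\pm1,0)$, and its $H^{s-1}$ norm blows up. This is a high$\times$high$\to$low interaction in which the output weight $\langle{\bf k}\rangle^{s-2}$ is $O(1)$. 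Hence neither your splitting of the convolution sum nor a duality argument in the style of \cite{A} can repair it. Superposing such packets with lacunary $N_{j}$ and $\ell^{2}$ amplitudes even gives data in $H^{s-1}\times H^{s}\times H^{s}$ for which the positive series defining $\widehat{\zeta u_{1}}(\pm1,0)$ diverges. So for $0<s<1/2$ the Duhamel map is not defined on $X_{T}$, and no contraction argument can deliver (ii) there. Your proof establishes (ii) only for $s\geq 1/2$, where the bilinear bound does hold (pair condition $2s-1\geq 0$, total order $1+s>1$). The paper offers nothing that closes this, since it only inherits the range $s>0$ in (ii) from Table~\ref{BFD_t1} and \cite{A}. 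You should state (ii) for $s\geq 1/2$ rather than try to prove the range as written.
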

\begin{remark}
As in the case of the ivp, when $b=d$, the system (\ref{BFD21a}), (\ref{BFD21b}) admits a Hamiltonian formulation, \cite{AnguloS2019}
\begin{eqnarray}
\partial_{t}\begin{pmatrix}\zeta\\{\bf u}\end{pmatrix}=\mathcal{J}\delta \mathcal{H}(\zeta,{\bf u}),\nonumber
\end{eqnarray}
where $\delta$ denotes variational derivative w.r.t. $(\zeta,{\bf u})$ and
\begin{eqnarray}
\mathcal{J}&=&-T_{b}\begin{pmatrix}0&\nabla\cdot\\\nabla&0\end{pmatrix},\nonumber\\
\mathcal{H}(\zeta,{\bf u})&=&\frac{1}{2}\int_{\Omega}\left((1-\gamma)\zeta^{2}+\frac{1}{\gamma}|{\bf u}|^{2}-\frac{1}{\gamma}\zeta|{\bf u}|^{2}-c(1-\gamma)|\nabla\zeta|^{2}\right.\nonumber\\
&&\left.-\frac{a}{\gamma}|\nabla{\bf u}|^{2}-\frac{1}{\gamma^{2}}\langle {\bf u},L_{1}{\bf u}\rangle-\frac{1}{\gamma^{3}}|L_{1}{\bf u}|^{2}\right)d{\bf x},\label{BFD25}
\end{eqnarray}
with
\begin{eqnarray*}
|\nabla {\bf u}|^{2}=|\nabla u_{1}|^{2}+|\nabla u_{2}|^{2},\quad
L_{1}{\bf u}=(S_{1}u_{1},S_{1}u_{2}).
\end{eqnarray*}
\end{remark}
\section{Spectral semidiscretization}
\label{sec3}
For an integer $N\geq 1$, we consider the finite dimensional space $S_{N}$ of trigonometric polynomials
\begin{eqnarray*}
S_{N}={\rm span}\{\phi_{{\bf k}}(x,y), {\bf k}=(k_{x},k_{y}), k_{j}\in\mathbb{Z}, -N\leq k_{j}\leq N, j=x,y\},
\end{eqnarray*}
where
\begin{eqnarray*}
\phi_{\bf k}(x,y)=e^{i{\bf x}\cdot{\bf k}},\quad {\bf x}=(x,y), {\bf x}\cdot{\bf k}=k_{x}x+k_{y}y.
\end{eqnarray*}
The space $S_{N}$ satisfies the following inverse inequalities, \cite{CHQZ}: {For integers $0\leq l\leq m$, $\phi\in S_{N}$
\begin{eqnarray}
||\phi||_{H^{m}}&\lesssim &N^{m-l}||\phi||_{H^{l}},\label{BFD31a}\\
||\phi||_{m,\infty}&\lesssim &N^{m+1-l}||\phi||_{H^{l}}.\label{BFD31b}
\end{eqnarray}
}
We define the Fourier-Galerkin approximation of (\ref{BFD23a})-(\ref{BFD23c}) in $S_{N}$ as mappings $\zeta^{N}, u_{j}^{N}:[0,T]\rightarrow S_{N}, j=1,2,$ such that if ${\bf u}^{N}=(u_{1}^{N},u_{2}^{N})$, for $0\leq t\leq T$ 
\begin{eqnarray}
\partial_{t}\zeta^{N}+\frac{1}{\gamma}R_{b}\cdot{\bf u}^{N}-\frac{1}{\gamma}R_{b}\cdot\left(\widetilde{P}_{N}(\zeta^{N}{\bf u}^{N})\right)-\frac{1}{\gamma^{2}}\widetilde{S}_{1}\cdot{\bf u}^{N}+\frac{1}{\gamma}\widetilde{S}_{2}(a)\cdot{\bf u}^{N}=0,&&\label{BFD32a}\\
\partial_{t}{\bf u}^{N}+(1-\gamma)\widetilde{R}_{d}(c)\zeta^{N}-\frac{1}{2\gamma}R_{d}P_{N}\left(|{\bf u}^{N}|^{2}\right)=0,&&\label{BFD32b}\\
\zeta^{N}(x,y,0)= P_{N}\zeta_{0}(x,y),\quad {\bf u}^{N}(x,y,0)=\widetilde{P}_{N}{\bf u}^{0}(x,y),&&\label{BFD32c}
\end{eqnarray}
where $P_{N}$ denotes the $L^{2}$-projection onto $S_{N}$, with $\widetilde{P}_{N}{\bf u}=(P_{N}u_{1},P_{N}u_{2})$ if ${\bf u}=(u_{1},u_{2})$. Compared to the 1D case, the following projection errors are valid in $\Omega$, \cite{CHQZ}: { For integers $0\leq j\leq \mu$ and $u\in H^{\mu}(\Omega)$
\begin{eqnarray}
||u-P_{N}u||_{H^{j}}&\lesssim &N^{j-\mu}||u||_{H^{\mu}},\label{BFD33a}\\
|u-P_{N}u|_{\infty}&\lesssim & N^{1-\mu}||u||_{H^{\mu}},\quad \mu> 1.\label{BFD33b}
\end{eqnarray}
}
Note also that $P_{N}$ commutes with $R_{k},k=b,d$ and with $\partial_{x},\partial_{y}$. Using standard ode theory and from the Fourier representation of (\ref{BFD32a}), (\ref{BFD32b}) (the system for the discrete Fourier coefficients of $\zeta^{N}, u_{j}^{N}, j=1,2$), the quadratic nonlinearities, (\ref{BFD22e}), and  the inverse inequalities (\ref{BFD31a}), (\ref{BFD31b}), we can derive existence of solution of  (\ref{BFD32a})-(\ref{BFD32c}), locally in time.
\subsection{Conservation properties}
Assume $b=d$ and that the solution of (\ref{BFD32a})-(\ref{BFD32c}) exists. We can write (\ref{BFD32a}), (\ref{BFD32b}) as
\begin{eqnarray}
T_{b}^{-1}\partial_{t}\zeta^{N}&=&-\nabla\cdot A_{N}=-\nabla\cdot\left(\frac{1}{\gamma}\left(\widetilde{P}_{N}(\zeta^{N}{\bf u}^{N})\right)+\mathcal{G}{\bf u}^{N}\right),\label{BFD34a}\\
T_{b}^{-1}\partial_{t}{\bf u}^{N}&=&-\nabla B_{N}=-\nabla\left((1-\gamma)T_{c}\zeta^{N}-\frac{1}{2\gamma}P_{N}\left(|{\bf u}^{N}|^{2}\right)\right),\label{BFD34b}
\end{eqnarray}
where if ${\bf u}=(u_{1},u_{2})$
\begin{eqnarray*}
\widehat{\mathcal{G}{\bf u}}({\bf k})=g({\bf k})\widehat{{\bf u}}({\bf k}),\quad {\bf k}\in\mathbb{Z}^{2},\label{BFD34c}
\end{eqnarray*}
with $g({\bf k})$ defined in (\ref{BFD1d}). Therefore, from (\ref{BFD25})
\begin{eqnarray*}
\frac{d}{dt}\mathcal{H}(\zeta^{N},{\bf u}^{N})&=&\int_{\Omega}\partial_{t}\zeta^{N}\left((1-\gamma)\zeta^{N}-\frac{1}{2\gamma}|{\bf u}^{N}|^{2}+c\Delta\zeta^{N}\right)d{\bf x}\\
&&+\int_{\Omega}\partial_{t}{\bf u}^{N}\cdot\left(-\frac{1}{\gamma}(\zeta^{N}{\bf u}^{N})+\mathcal{G}{\bf u}^{N}\right)d{\bf x}.
\end{eqnarray*}
Now, from the property
\begin{eqnarray*}
(P_{N}\varphi,\psi)=(\varphi,\psi),\quad \varphi\in L^{2},\; \psi\in S_{N},
\end{eqnarray*}
and (\ref{BFD34a}), (\ref{BFD34b}), we have, integrating by parts and using the periodic boundary conditions
\begin{eqnarray*}
\frac{d}{dt}\mathcal{H}(\zeta^{N},{\bf u}^{N})=-\int_{\Omega}\left(\left(T_{b}\nabla\cdot A_{N}\right)B_{N}+A_{N}\cdot T_{b}\nabla B_{N}\right)d{\bf x}=0.
\end{eqnarray*}
This proves the following conservation property:
\begin{proposition}
\label{Prop31}
Assume $b=d$. While the solution $(\zeta^{N},{\bf u}^{N})$ of (\ref{BFD32a})-(\ref{BFD32c}) exists, then
\begin{eqnarray*}
\mathcal{H}(\zeta^{N},{\bf u}^{N})(\cdot,t)=\mathcal{H}(\zeta^{N},{\bf u}^{N})(\cdot,0),\quad t\geq 0,
\end{eqnarray*}
where $\mathcal{H}$ is given by (\ref{BFD25}).
\end{proposition}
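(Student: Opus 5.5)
The plan is to show that $t\mapsto\mathcal{H}(\zeta^{N},{\bf u}^{N})(\cdot,t)$ is differentiable with zero derivative on the existence interval of the semidiscrete solution. Since $(\zeta^{N},{\bf u}^{N})$ solves a finite-dimensional ODE system for its Fourier coefficients, it is $C^{1}$ in time with values in $S_{N}$. All operators in (\ref{BFD25}), namely $\nabla$ and $L_{1}$, act diagonally on the finitely many modes, so every term of $\mathcal{H}$ is a smooth function of the coefficients. This makes the differentiation under the integral legitimate.

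First compute the variational derivatives of $\mathcal{H}$ at $(\zeta^{N},{\bf u}^{N})$. Using Parseval and the facts that $S_{1}$ is self-adjoint (real even symbol) and that $\Delta$ is obtained by integration by parts with periodic boundary conditions, one gets
\begin{eqnarray*}
\delta_{\zeta}\mathcal{H}=(1-\gamma)T_{c}\zeta^{N}-\frac{1}{2\gamma}|{\bf u}^{N}|^{2},\qquad
\delta_{{\bf u}}\mathcal{H}=\frac{1}{\gamma}{\bf u}^{N}-\frac{1}{\gamma}\zeta^{N}{\bf u}^{N}+\Big(\frac{a}{\gamma}\Delta-\frac{1}{\gamma^{2}}S_{1}-\frac{1}{\gamma^{3}}S_{1}^{2}\Big){\bf u}^{N}.
\end{eqnarray*}
Next, check against Table \ref{BFD_t2} that the linear part of $\delta_{{\bf u}}\mathcal{H}$ is exactly $\mathcal{G}{\bf u}^{N}$ with symbol $g({\bf k})$ from (\ref{BFD1d}), applied componentwise. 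This matching, with the signs of $a$ and of the $\tau_{1}^{2}$ term, is the one place where care is needed, and I expect it to be the only real obstacle. Combining this identification with (\ref{BFD32a}), (\ref{BFD32b}) for $b=d$ gives the reformulation (\ref{BFD34a}), (\ref{BFD34b}), in which $A_{N}$ and $B_{N}$ involve the projected nonlinearities.

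Then write $\frac{d}{dt}\mathcal{H}=(\partial_{t}\zeta^{N},\delta_{\zeta}\mathcal{H})+\langle\partial_{t}{\bf u}^{N},\delta_{{\bf u}}\mathcal{H}\rangle$. Since $\partial_{t}\zeta^{N},\partial_{t}{\bf u}^{N}\in S_{N}$, the property $(P_{N}\varphi,\psi)=(\varphi,\psi)$ for $\psi\in S_{N}$ lets us replace $|{\bf u}^{N}|^{2}$ by $P_{N}|{\bf u}^{N}|^{2}$ and $\zeta^{N}{\bf u}^{N}$ by $\widetilde{P}_{N}(\zeta^{N}{\bf u}^{N})$. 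After this substitution $\delta_{\zeta}\mathcal{H}$ becomes $B_{N}$ and $\delta_{{\bf u}}\mathcal{H}$ becomes $A_{N}$, where $A_{N}$ contains $\frac{1}{\gamma}{\bf u}^{N}$ as in the $\frac{1}{\gamma}R_{b}\cdot{\bf u}^{N}$ term. Substituting $\partial_{t}\zeta^{N}=-T_{b}\nabla\cdot A_{N}$ and $\partial_{t}{\bf u}^{N}=-T_{b}\nabla B_{N}$ then gives
\begin{eqnarray*}
\frac{d}{dt}\mathcal{H}=-\int_{\Omega}\left((T_{b}\nabla\cdot A_{N})B_{N}+A_{N}\cdot T_{b}\nabla B_{N}\right)d{\bf x}.
\end{eqnarray*}

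Finally, $T_{b}$ is self-adjoint and commutes with $\nabla$, so the first integral equals $\int_{\Omega}(\nabla\cdot A_{N})\,T_{b}B_{N}\,d{\bf x}$. Integrating by parts with periodicity turns this into $-\int_{\Omega}A_{N}\cdot\nabla T_{b}B_{N}\,d{\bf x}$, which cancels the second integral exactly. This is the skew-adjointness of $\mathcal{J}$. Hence the derivative vanishes, and integrating in time gives the stated conservation of $\mathcal{H}$.
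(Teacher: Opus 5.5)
Your route is the same as the paper's. You differentiate $\mathcal{H}$ along the flow, use $(P_N\varphi,\psi)=(\varphi,\psi)$ for $\psi\in S_N$ to turn the variational derivatives into $B_N$ and $A_N$, and conclude from the skew-adjointness of $T_b\begin{pmatrix}0&\nabla\cdot\\ \nabla&0\end{pmatrix}$. The gap is the step you postponed as ``the only real obstacle'': for (\ref{BFD25}) as printed, the linear part of $\delta_{\bf u}\mathcal{H}$ is \emph{not} $\mathcal{G}{\bf u}^N$.

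Your own variational derivative, which is correct, gives that linear part the symbol $\frac{1}{\gamma}-\frac{1}{\gamma^{2}}\tau_{1}({\bf k})-\frac{a}{\gamma}|{\bf k}|^{2}-\frac{1}{\gamma^{3}}\tau_{1}({\bf k})^{2}$. Expanding (\ref{BFD1d}) instead gives $g({\bf k})=\frac{1}{\gamma}-\frac{1}{\gamma^{2}}\tau_{1}({\bf k})-\frac{a}{\gamma}|{\bf k}|^{2}+\frac{1}{\gamma^{3}}\tau_{1}({\bf k})^{2}$, because $-\frac{1}{\gamma}|{\bf k}|^{2}\bigl(-\frac{1}{\gamma^{2}}{\rm coth}^{2}|{\bf k}|\bigr)=+\frac{1}{\gamma^{3}}\tau_{1}^{2}$. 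The plus sign is the one in (\ref{BFD1a}) and in $\widetilde{S}_{2}(a)$ of Table \ref{BFD_t2}. So the semidiscrete equations genuinely involve $\mathcal{G}$, and it is the last term of (\ref{BFD25}) that carries the wrong sign.

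With $\mathcal{H}$ literally as in (\ref{BFD25}), your computation therefore leaves the residual
\[
\frac{d}{dt}\mathcal{H}(\zeta^{N},{\bf u}^{N})=-\frac{2}{\gamma^{3}}\langle L_{1}^{2}{\bf u}^{N},\partial_{t}{\bf u}^{N}\rangle=-\frac{2}{\gamma^{3}}\int_{\Omega}\bigl(T_{b}S_{1}^{2}\nabla\cdot{\bf u}^{N}\bigr)B_{N}\,d{\bf x}.
\]
This does not vanish in general. Take $\zeta_{0}=\cos x$ and ${\bf u}^{0}=(\sin x,0)$; the nonlinear part of $B_N$ is orthogonal to $\cos x$. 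Then at $t=0$ the residual equals $-\frac{2(1-\gamma)(1-c)\,{\rm coth}^{2}(1)}{\gamma^{3}(1+b)}\|\sin x\|^{2}\neq 0$.

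The repair is to use $+\frac{1}{\gamma^{3}}|L_{1}{\bf u}|^{2}$ in (\ref{BFD25}). With that sign the quadratic ${\bf u}$-part of $\mathcal{H}$ has the positive symbol $g$, and your argument then closes verbatim. The paper's proof passes over the same point: it simply writes $-\frac{1}{\gamma}\zeta^{N}{\bf u}^{N}+\mathcal{G}{\bf u}^{N}$ as the ${\bf u}$-derivative without checking it. You were right to flag this step; the check just had to be carried out, and it forces the sign correction.

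Your $\delta_{\zeta}\mathcal{H}=(1-\gamma)T_{c}\zeta^{N}-\frac{1}{2\gamma}|{\bf u}^{N}|^{2}$ and the minus sign on $\frac{1}{\gamma}\zeta^{N}{\bf u}^{N}$ are correct. The paper's displayed $A_N$ and its $c\Delta\zeta^{N}$ term contain small slips on exactly these points.
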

\begin{remark}
In section \ref{sec4}, the implementation of (\ref{BFD34a}), (\ref{BFD34b}) in collocation form will lead, when $b=d$, to a Hamiltonian structure with a discrete version of  (\ref{BFD25}) as Hamiltonian of the semidiscrete system.
\end{remark}
\subsection{Error estimates}
We now study the error between the semidiscrete solution of (\ref{BFD32a})-(\ref{BFD32c}) and the solution of (\ref{BFD21a})-(\ref{BFD21c}) for the B-FD systems (G1)-(G3), presented in the Introduction and concerned in Theorem \ref{TH21}. The corresponding estimates will be compared with the results obtained in \cite{DDS3} for the 1D case.

We decompose, as usual, the errors in the form $\zeta^{N}-\zeta=\theta+\rho$, ${\bf u}^{N}-{\bf u}={\bf \xi}+{\bf \sigma}$, where
$\theta=\zeta^{N}-P_{N}\zeta, \rho=P_{N}\zeta-\zeta$, and ${\bf \xi}=(\xi_{1},\xi_{2})={\bf u}^{N}-\widetilde{P_{N}}{\bf u}, {\bf \sigma}=(\sigma_{1},\sigma_{2})=\widetilde{P_{N}}{\bf u}-{\bf u}$. Then, from  (\ref{BFD32a})-(\ref{BFD32c}) and (\ref{BFD21a})-(\ref{BFD21c}), we derive the following system for $\theta$ and ${\bf \xi}$:
\begin{eqnarray}
\partial_{t}\theta+\frac{1}{\gamma}R_{b}\cdot{\bf \xi}-\frac{1}{\gamma}R_{b}\cdot\left(\widetilde{P}_{N}(A)\right)-\frac{1}{\gamma^{2}}\widetilde{S}_{1}\cdot{\bf \xi}+\frac{1}{\gamma}\widetilde{S}_{2}(a)\cdot{\bf \xi}=0,&&\label{BFD35a}\\
\partial_{t}{\bf \xi}+(1-\gamma)\widetilde{R}_{d}(c)\theta-\frac{1}{2\gamma}R_{d}P_{N}\left(B\right)=0,&&\label{BFD35b}\\
\theta(x,y,0)=0,\quad {\bf \xi}(x,y,0)=0,\quad (x,y)\in\Omega,&&\label{BFD35c}
\end{eqnarray}
with
\begin{eqnarray}
A&=&\zeta^{N}{\bf u}^{N}-\zeta{\bf u}=\rho{\bf u}+\zeta{\bf \sigma}+\theta{\bf u}+\zeta\xi+\theta\sigma+\rho\xi+\rho\sigma+\theta\xi,\label{BFD35d}\\
B&=&|{\bf u}^{N}|^{2}-|{\bf u}|^{2}=2{\bf u}\cdot\sigma+2{\bf u}\cdot\xi+2\sigma\cdot\xi+{|\xi|^{2}+|\sigma|^{2}},\label{BFD35e}
\end{eqnarray}
We will also make use of the Fourier representation of (\ref{BFD34a}), (\ref{BFD34b}), which has the form
\begin{eqnarray}
&&\frac{d}{dt}\begin{pmatrix}\widehat{\theta}\\\widehat{\xi_{1}}\\\widehat{\xi_{2}}\end{pmatrix}({\bf k},t)+i|{\bf k}|\mathcal{A}({\bf k})\begin{pmatrix}\widehat{\theta}\\\widehat{\xi_{1}}\\\widehat{\xi_{2}}\end{pmatrix}({\bf k},t)+i\mathcal{F}_{N}(\widehat{\theta},\widehat{\xi_{1}},\widehat{\xi_{2}})({\bf k},t)=0,\label{BFD36a}\\
&&\widehat{\theta}({\bf k},0)=0,\quad \widehat{\xi_{j}}({\bf k},0)=0, j=1,2,\label{BFD36b}
\end{eqnarray}
where $\mathcal{A}$ is given by (\ref{BFD24b}) and
\begin{eqnarray*}
\mathcal{F}_{N}(\widehat{\theta},\widehat{\xi})=\begin{pmatrix}\frac{1}{\gamma}
\frac{{\bf k}\cdot \widehat{P_{N}A}}{1+b|{\bf k}|^{2}}\\
\frac{1}{2\gamma}\frac{k_{x}\widehat{P_{N}B}}{1+d|{\bf k}|^{2}}\\
\frac{1}{2\gamma}\frac{k_{y}\widehat{P_{N}B}}{1+d|{\bf k}|^{2}})\end{pmatrix}.\label{BFD36c}
\end{eqnarray*}
\subsubsection{Group (G1): The BBM-BBM B-FD case ($b,d>0, a=c=0$)}
We consider the first case of weakly disperrsive B-FD systems.
\begin{proposition}
\label{Prop32}
Assume that $b,d>0, a=c=0$ and that the solution $(\zeta,{\bf u})$ of (\ref{BFD21a})-(\ref{BFD21c}) satisfies $(\zeta,{\bf u})\in H^{\mu}\times H^{\mu}\times H^{\mu}, \mu>1$ for $0\leq t\leq T$. Then, for $N$ large enough
\begin{eqnarray}
\max_{0\leq t\leq T}\left(||\zeta^{N}-\zeta||+||{\bf u}^{N}-{\bf u}||_{1}\right)\leq C N^{-\mu},\label{BFD37}
\end{eqnarray}
where $C$ is a constant dependent on $\zeta_{0}, {\bf u}^{0}$ and $T$, but independent of $N$
\end{proposition}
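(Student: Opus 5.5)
The plan is a standard energy argument for the error system (\ref{BFD35a})--(\ref{BFD35c}) with $a=c=0$, using the Fourier symbols in Table \ref{BFD_t2} to fix the order of each linear operator. Since $b>0$, the symbols of $R_{b}$ and $\widetilde{S}_{1}$ are bounded, so both are bounded on $L^{2}$ by (\ref{BFD22c}), (\ref{BFD22e}). The symbol of $\widetilde{S}_{2}(0)$ behaves like $|{\bf k}|$, so it maps $H^{1}\to L^{2}$ by (\ref{BFD22d}). The operator $\widetilde{R}_{d}(0)=R_{d}$ maps $L^{2}\to H^{1}$ by (\ref{BFD22g}). This dictates measuring $\theta$ in $L^{2}$ and $\xi$ in $H^{1}$.

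\emph{Step 1: differential inequalities.} Take the $L^{2}$ inner product of (\ref{BFD35a}) with $\theta$. Because $R_{b}$ and $P_{N}$ commute and $\|R_{b}f\|\lesssim\|f\|_{-1}\le\|f\|$, we get
\begin{eqnarray*}
\frac{d}{dt}\|\theta\|\lesssim \|\xi\|_{1}+\|A\|.
\end{eqnarray*}
Applying $\|\cdot\|_{1}$ to (\ref{BFD35b}) and using (\ref{BFD22e}), (\ref{BFD22g}) with $s=1$ gives
\begin{eqnarray*}
\frac{d}{dt}\|\xi\|_{1}\lesssim \|\theta\|+\|B\|.
\end{eqnarray*}
Equivalently, one can differentiate $\|\theta\|^{2}+\|\xi\|_{1}^{2}$ directly, using the Fourier form (\ref{BFD36a}).

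\emph{Step 2: the nonlinear terms.} Each term of $A$ and $B$ in (\ref{BFD35d}), (\ref{BFD35e}) is bounded by H\"older's inequality, putting the $L^{\infty}$ norm on one factor:
\begin{itemize}
\item For $\rho{\bf u}$, $\zeta\sigma$, $\theta{\bf u}$, $\zeta\xi$, ${\bf u}\cdot\sigma$ and ${\bf u}\cdot\xi$, use $|\zeta|_{\infty},|{\bf u}|_{\infty}\lesssim\|\cdot\|_{\mu}$, which holds since $\mu>1$ in 2D. This gives bounds $\lesssim N^{-\mu}+\|\theta\|+\|\xi\|$, by (\ref{BFD33a}) with $j=0$.
\item For $\theta\sigma$, $\rho\sigma$ and $|\sigma|^{2}$, use (\ref{BFD33b}): $|\sigma|_{\infty}\lesssim N^{1-\mu}$. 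This produces $N^{1-\mu}\|\theta\|$ and $N^{1-2\mu}\lesssim N^{-\mu}$.
\item For $\rho\xi$, $\sigma\cdot\xi$, $\theta\xi$ and $|\xi|^{2}$, the difficulty is $|\xi|_{\infty}$, since $H^{1}\not\subset L^{\infty}$ in 2D. Here I would invoke the inverse inequality (\ref{BFD31b}) with $m=0$, $l=1$, which gives $|\xi|_{\infty}\lesssim\|\xi\|_{1}$ for $\xi\in S_{N}$ with no loss in $N$. These terms are then bounded by $N^{-\mu}\|\xi\|_{1}$ and $(\|\theta\|+\|\xi\|_{1})\|\xi\|_{1}$.
\end{itemize}

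\emph{Step 3: bootstrap and Gronwall.} Let $t_{N}$ be the largest time in $[0,T]$ with $\|\theta\|+\|\xi\|_{1}\le 1$ on $[0,t_{N}]$; this is well defined since $\theta(0)=\xi(0)=0$ and the semidiscrete solution exists locally. On $[0,t_{N}]$ the quadratic terms are dominated by the linear ones. Hence
\begin{eqnarray*}
\frac{d}{dt}\left(\|\theta\|+\|\xi\|_{1}\right)\le C\left(\|\theta\|+\|\xi\|_{1}\right)+CN^{-\mu},
\end{eqnarray*}
with $C$ depending on $\max_{t}\|(\zeta,{\bf u})\|_{\mu}$, and Gronwall gives $\|\theta\|+\|\xi\|_{1}\le CN^{-\mu}$. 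For $N$ large this is $<1$, so $t_{N}=T$, which also yields global existence of the semidiscrete solution on $[0,T]$. Finally, write $\zeta^{N}-\zeta=\theta+\rho$ and ${\bf u}^{N}-{\bf u}=\xi+\sigma$ and combine with the projection estimates (\ref{BFD33a}).

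\emph{Main obstacle.} The critical points are the $L^{\infty}$ control of $\xi$ and of $\sigma$ in 2D, which is where (\ref{BFD31b}) and (\ref{BFD33b}) replace the 1D Sobolev embedding. A second caveat concerns the final step: (\ref{BFD33a}) with $j=1$ only gives $\|\sigma\|_{1}\lesssim N^{1-\mu}\|{\bf u}\|_{\mu}$. So the $H^{1}$ velocity error inherits $N^{1-\mu}$ unless one assumes ${\bf u}\in H^{\mu+1}$ or reads (\ref{BFD37}) as a bound on $\theta$, $\xi$ plus the $L^{2}$ projection errors. This matches the suboptimality in $H^{1}$ for the velocity mentioned in the Introduction, and I would check carefully which reading the constant in (\ref{BFD37}) allows.
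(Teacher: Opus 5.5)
Your architecture (norms of the error equations, H\"older with the $L^\infty$ norm on one factor, a bootstrap, then Gronwall) is the paper's. However, the step you yourself flag as the main obstacle relies on a false inequality.

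There is no $N$-uniform bound $|\xi|_\infty\lesssim\|\xi\|_1$ on $S_N$ in 2D. The inverse inequality (\ref{BFD31b}) is stated only for $0\le l\le m$, so the case $m=0$, $l=1$ is not available. Moreover, a uniform bound would give $H^1\subset L^\infty$ by density, which you correctly note is false. The sharp estimate is $|\xi|_\infty\lesssim\sqrt{\log N}\,\|\xi\|_1$ (cf. (\ref{esti1})). It is attained up to constants by $\sum_{|k_x|,|k_y|\le N}(1+|{\bf k}|^2)^{-1}\phi_{\bf k}$.

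This breaks your Step 3 as written. Under your bootstrap hypothesis $\|\theta\|+\|\xi\|_1\le 1$, consider the term $\theta\xi$. There the $L^\infty$ norm must fall on $\xi$, because $\theta$ is only controlled in $L^2$. You then get only $\|\theta\xi\|\lesssim\sqrt{\log N}\,\|\theta\|$. So the coefficient in your differential inequality grows with $N$, Gronwall produces a factor $e^{C\sqrt{\log N}\,T}$, and the constant in your final bound is no longer independent of $N$. The terms $\rho\xi$ and $\sigma\cdot\xi$ are harmless: put the $L^\infty$ norm on $\rho$ and $\sigma$ via (\ref{BFD33b}) to get $N^{1-\mu}\|\xi\|$.

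The repair is the paper's: bootstrap on the quantity you actually need.
\begin{itemize}
\item Let $t_N$ be maximal with $|\xi|_\infty\le 1$ on $[0,t_N]$.
\item On that interval $\|\theta\xi\|\lesssim\|\theta\|$ and $\||\xi|^2\|\lesssim\|\xi\|$.
\item Gronwall then gives $\|\theta\|+\|\xi\|_1\le CN^{-\mu}$, with $C$ independent of $N$ and $t_N$.
\item The bootstrap closes with the lossy but valid bound $|\xi|_\infty\lesssim N\|\xi\|\le N\|\xi\|_1\lesssim N^{1-\mu}$, which is the case $m=l=0$ of (\ref{BFD31b}). This tends to zero precisely because $\mu>1$.
\end{itemize}
Alternatively, you can keep your energy-type threshold but sharpen it to $\sqrt{\log N}\,(\|\theta\|+\|\xi\|_1)\le 1$. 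It closes for large $N$ since $\sqrt{\log N}\,N^{-\mu}\to 0$.

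Your closing caveat is correct, and it applies equally to the paper's proof. That proof stops at $\|\theta\|+\|\xi\|_1\le CN^{-\mu}$ and then asserts (\ref{BFD37}). But $\xi\in S_N$ and $\sigma$ have disjoint Fourier supports, so they are $H^1$-orthogonal and $\|{\bf u}^N-{\bf u}\|_1\ge\|\sigma\|_1$. At $t=0$ the velocity error is exactly $\widetilde{P}_N{\bf u}^0-{\bf u}^0$. For general ${\bf u}^0\in H^\mu$, its $H^1$ norm is not $O(N^{-\mu})$; (\ref{BFD33a}) gives only $N^{1-\mu}$. What the argument actually delivers is $\|\zeta^N-\zeta\|+\|{\bf u}^N-{\bf u}\|\le CN^{-\mu}$ together with $\|{\bf u}^N-{\bf u}\|_1\le CN^{1-\mu}$.
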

\begin{proof}
In this case the system (\ref{BFD35a}), (\ref{BFD35b}) has the form
\begin{eqnarray}
\partial_{t}\theta+\frac{1}{\gamma}R_{b}\cdot{\bf \xi}-\frac{1}{\gamma}R_{b}\cdot\left(\widetilde{P}_{N}(A)\right)-\frac{1}{\gamma^{2}}\widetilde{S}_{1}\cdot{\bf \xi}+\frac{1}{\gamma}\widetilde{S}_{2}(0)\cdot{\bf \xi}=0,&&\label{BFD37a}\\
\partial_{t}{\bf \xi}+(1-\gamma)\widetilde{R}_{d}(0)\theta-\frac{1}{2\gamma}R_{d}P_{N}\left(B\right)=0.&&\label{BFD37b}
\end{eqnarray}
While the semidiscrete solution exists, we take the $L^{2}$ norm of (\ref{BFD37a}) and the $H^{1}\times H^{1}$ norm of (\ref{BFD37b}) to have
\begin{eqnarray}
||\partial_{t}\theta||&\leq & C\left(||R_{b}\cdot{\bf \xi}||+||R_{b}\cdot\widetilde{P}_{N}(A)||+||\widetilde{S}_{1}\cdot{\bf \xi}||+||\widetilde{S}_{2}(0)\cdot{\bf \xi}||\right),\label{BFD38a}\\
||\partial_{t}\xi||_{1}&\leq &C\left(||\widetilde{R}_{d}(0)\theta||_{1}+||R_{d}P_{N}\left(B\right)||_{1}\right).\label{BFD38b}
\end{eqnarray}
Applying (\ref{BFD22a})-(\ref{BFD22g}) to (\ref{BFD38a}), (\ref{BFD38b}) yields
\begin{eqnarray}
||\partial_{t}\theta||&\leq & C\left(||{\bf \xi}||_{1}+||A||\right),\label{BFD39a}\\
||\partial_{t}\xi||_{1}&\leq &C\left(||\theta||+||B||\right),\label{BFD39b}
\end{eqnarray}
for some constant $C$. We now estimate $||A||$ and $||B||$ from (\ref{BFD35d}), (\ref{BFD35e}). First we have
\begin{eqnarray}
||A||&\lesssim&|{\bf u}|_{\infty}||\rho||+|\zeta|_{\infty}||\sigma||+|\zeta|_{\infty}||\xi||\nonumber\\
&&+|\sigma|_{\infty}||\theta||+|\rho|_{\infty}||\xi||+|\rho|_{\infty}||\sigma||\nonumber\\
&&+|\xi|_{\infty}||\theta||+|{\bf u}|_{\infty}||\theta||.\label{BFD310}
\end{eqnarray}
Using continuity and since $\xi(0)=0$, we can consider some $t_{N}$ such that $0<t_{N}\leq T$ is the maximal time for which 
\begin{eqnarray}
|\xi|_{\infty}\leq 1,\quad 0\leq t\leq t_{N}.\label{BFD311}
\end{eqnarray}
Then, from (\ref{BFD310}),  (\ref{BFD311}), the estimates (\ref{BFD33a}), (\ref{BFD33b}), and using Theorem \ref{TH21} and  the embedding $H^{\mu}\hookrightarrow L^{\infty}$ for $\mu>1$, we have, since $N\geq 1$
\begin{eqnarray}
||A||&\lesssim&N^{-\mu}||\zeta||_{\mu}+N^{-\mu}||{\bf u}||_{\mu}+||\xi||_{1}\nonumber\\
&&+N^{1-\mu}||\theta||+N^{1-\mu}N^{-\mu}||{\bf u}||_{\mu}+||\theta||+N^{1-\mu}||\xi||_{1}\nonumber\\
&\lesssim&N^{1-2\mu}+||\xi||_{1}+||\theta||\leq N^{-\mu}+||\xi||_{1}+||\theta||.\label{BFD312}
\end{eqnarray}
Similarly, in the case of the terms in $B$ we have
\begin{eqnarray*}
||B||\lesssim|{\bf u}|_{\infty}(||\sigma||+||\xi||)+|\sigma|_{\infty}||\xi||
+|\sigma|_{\infty}||\sigma||+|\xi|_{\infty}||\xi||.
\end{eqnarray*}
Then, from (\ref{BFD311}) and the estimates (\ref{BFD33a}), (\ref{BFD33b}), since $\mu>1$
\begin{eqnarray}
||B||&\lesssim&N^{-\mu}+||\xi||_{1}+N^{1-\mu}||\xi||_{1}+N^{1-\mu}N^{-\mu}\lesssim N^{-\mu}+||\xi||_{1}.\label{BFD313}
\end{eqnarray}
Therefore, applying (\ref{BFD312}), (\ref{BFD313}) to (\ref{BFD39a}), (\ref{BFD39b}), we have, for $\mu>1$ and $0<t\leq t_{N}$
\begin{eqnarray}
||\partial_{t}\theta(t)||+||\partial_{t}\xi(t)||_{1}\leq C\left(N^{-\mu}+||\theta(t)||+||\xi(t)||_{1}\right),\label{BFD314}
\end{eqnarray}
with $C$ independent of $N$ and $t_{N}$. Using (\ref{BFD35c}) and Gronwall's inequality, it holds that
\begin{eqnarray}
||\theta(t)||+||\xi(t)||_{1}\leq CN^{-\mu},\quad 0<t\leq t_{N}.\label{BFD315}
\end{eqnarray}
Note that from the inverse inequalities (\ref{BFD31a}), (\ref{BFD31b})
\begin{eqnarray}
|\xi|_{\infty}\lesssim N||\xi||_{1}.\label{BFD315a}
\end{eqnarray}
Then, using (\ref{BFD315}) and since $\mu>1$
\begin{eqnarray*}
|\xi|_{\infty}\lesssim N^{1-\mu},\label{BFD315b}
\end{eqnarray*}
which implies that $t_{N}$ is not maximal in (\ref{BFD311}) if we take $N$ large enough. Continuing this argument up to $t_{N}=T$ we obtain (\ref{BFD315}) for $0<t\leq T$, leading to (\ref{BFD37}).
\end{proof}
\begin{remark}
\label{rem1}
Note that the final argument is different from that in the 1D case; cf. \cite{DDS3}. In addtion, a sharper estimate, instead of (\ref{BFD315a}), is
\begin{eqnarray}
|\xi|_{\infty}\lesssim \sqrt{\log{N}}||\xi||_{1}.\label{esti1}
\end{eqnarray}
On the other hand, the estimate (\ref{BFD37}) is optimal for $\zeta$ and ${\bf u}$ in the $L^{2}$ norm.
\end{remark}
\subsubsection{Group (G2): The generic B-FD case ($b,d>0, a, c<0$)}
We now consider the generic system (\ref{BFD35a}), (\ref{BFD35b})  in its Fourier representation  (\ref{BFD36a}), (\ref{BFD36b}). Let $g$ be given by (\ref{BFD1d}).
Note first that, using the properties mentioned in section \ref{sec2}, when $|{\bf k}|$ is small
\begin{eqnarray*}
g({\bf k})\approx \frac{1}{\gamma}\left(1+\frac{1}{\gamma}\left(\frac{1}{\gamma}-1\right)+|a||{\bf k}|^{2}+\frac{|{\bf k}|^{2}}{3\gamma}\left(\frac{2}{\gamma}-1\right)\right)+O(|{\bf k}|^{4}),
\end{eqnarray*}
which, for $|{\bf k}|$ small enough, behaves like $d_{1}'+d_{2}'|{\bf k}|^{2}$ for some constants $d_{1}',d_{2}'>0$.
When $|{\bf k}|$ is large
\begin{eqnarray*}
g({\bf k})\approx \frac{1}{\gamma}\left(1-\frac{|{\bf k}|}{\gamma}+|a||{\bf k}|^{2}+\frac{|{\bf k}|^{2}}{\gamma^{2}}\right),
\end{eqnarray*}
which, for $|{\bf k}|$ large enough, behaves like $d_{1}+d_{2}|{\bf k}|^{2}$ for some constants $d_{1},d_{2}>0$. This will be used in the sequel.

We first diagonalize (\ref{BFD36a}), (\ref{BFD36b}). The eigenvalues of $\mathcal{A}({\bf k})$ are $\{0,\pm\sigma({\bf k})\}$, where
\begin{eqnarray}
\sigma({\bf k})=\left(\frac{(1-\gamma)g({\bf k})(1-c|{\bf k}|^{2})}{(1+b|{\bf k}|^{2})(1+d|{\bf k}|^{2})}\right)^{1/2},\label{sigma}
\end{eqnarray}
and eigenvectors given by the columns of
\begin{eqnarray}
P({\bf k})=\begin{pmatrix}0&\alpha({\bf k})&-\alpha({\bf k})\\-\frac{k_{y}}{|{\bf k}|}&\frac{k_{x}}{|{\bf k}|}&\frac{k_{x}}{|{\bf k}|}\\\frac{k_{x}}{|{\bf k}|}&\frac{k_{y}}{|{\bf k}|}&\frac{k_{y}}{|{\bf k}|}\end{pmatrix},
\alpha({\bf k})=\left(\frac{g({\bf k})(1+d|{\bf k}|^{2})}{(1-\gamma)(1+b|{\bf k}|^{2})(1-c|{\bf k}|^{2})}\right)^{1/2}. \label{BFD317}
\end{eqnarray}
Then
\begin{eqnarray*}
P({\bf k})^{-1}\mathcal{A}({\bf k})P({\bf k})=D({\bf k})=\begin{pmatrix}0&0&0\\0&\sigma({\bf k})&0\\0&0&-\sigma({\bf k})\end{pmatrix}.
\end{eqnarray*}
We consider the change of variables, cf. \cite{DMS2007}
\begin{eqnarray}
\begin{pmatrix}\widehat{\eta}\\\widehat{v_{1}}\\\widehat{v_{2}}\end{pmatrix}=P^{-1}({\bf k})\begin{pmatrix}\widehat{\theta}\\\widehat{\xi_{1}}\\\widehat{\xi_{2}}\end{pmatrix},\label{BFD317a}
\end{eqnarray}
that is
\begin{eqnarray*}
\widehat{\eta}({\bf k},t)&=&-\frac{k_{y}}{|{\bf k}|}\widehat{\xi_{1}}({\bf k},t)+\frac{k_{x}}{|{\bf k}|}\widehat{\xi_{2}}({\bf k},t),\label{BFD318a}\\
\widehat{v_{1}}({\bf k},t)&=&\frac{1}{2\alpha({\bf k})}\widehat{\theta}({\bf k},t)+\frac{k_{x}}{2|{\bf k}|}\widehat{\xi_{1}}({\bf k},t)+\frac{k_{y}}{2|{\bf k}|}\widehat{\xi_{2}}({\bf k},t),\label{BFD318b}\\
\widehat{v_{2}}({\bf k},t)&=&-\frac{1}{2\alpha({\bf k})}\widehat{\theta}({\bf k},t)+\frac{k_{x}}{2|{\bf k}|}\widehat{\xi_{1}}({\bf k},t)+\frac{k_{y}}{2|{\bf k}|}\widehat{\xi_{2}}({\bf k},t).\label{BFD318c}
\end{eqnarray*}
Note that, from (\ref{BFD317}), $\alpha({\bf k})\neq 0$ and, due to the properties of $g({\bf k})$ mentioned above, $\alpha({\bf k})$ has order zero. Therefore
\begin{eqnarray}
C_{1}\left(||\eta||_{s}+||{\bf v}||_{s}\right)\leq ||\theta||_{s}+||\xi||_{s}\leq C_{2}\left(||\eta||_{s}+||{\bf v}||_{s}\right),\label{BFD318d}
\end{eqnarray}
for $s\geq 0$ and some constants $C_{1},C_{2}$ independent of $N, S, \theta,\eta,{\bf v}$ and $\xi$. Then the system  (\ref{BFD36a}), (\ref{BFD36b}) is diagonalized in the form
\begin{eqnarray}
\frac{d}{dt}\begin{pmatrix}\widehat{\eta}\\\widehat{v_{1}}\\\widehat{v_{2}}\end{pmatrix}({\bf k},t)+i|{\bf k}|D({\bf k})\begin{pmatrix}\widehat{\eta}\\\widehat{v_{1}}\\\widehat{v_{2}}\end{pmatrix}(k,t)
+iP({\bf k})^{-1}\mathcal{F}_{N}(\widehat{\eta},\widehat{v_{1}},\widehat{v_{2}})=0,&&\nonumber\\
\widehat{\eta}({\bf k},0)=\widehat{v_{1}}({\bf k},0)=\widehat{v_{2}}({\bf k},0)=0,&&\nonumber
\end{eqnarray}
for ${\bf k}=(k_{x},k_{y}), -N\leq k_{x},k_{y}\leq N$ or, in physical variables
\begin{eqnarray}
&&\frac{d}{dt}\begin{pmatrix}{\eta}\\{v_{1}}\\{v_{2}}\end{pmatrix}+\mathcal{B}\begin{pmatrix}{\eta}\\{v_{1}}\\{v_{2}}\end{pmatrix}+{F}({\eta},{v_{1}},{v_{2}})=0,\label{BFD319a}\\
&&\eta(0)={\bf v}(0)=0,\label{BFD319b}
\end{eqnarray}
where $\mathcal{B}$ is the operator with Fourier symbol
$
i|{\bf k}|D({\bf k})
$ and
the Fourier representation of ${F}$ is
$iP({\bf k})^{-1}\mathcal{F}_{N}(\widehat{\eta}({\bf k},t),\widehat{v_{1}}({\bf k},t),\widehat{v_{2}}({\bf k},t)).$ 

We note that $\mathcal{B}$ is skew-adjoint with domain $X=H^{1}\times H^{1}\times H^{1}$.  Let $\mathcal{U}(t), t\geq 0$ be the semigroup whose infinitesimal generator is $-\mathcal{B}$. From  the previous analysis it holds that $\mathcal{U}(t)$ is unitary. Duhamel's formula for (\ref{BFD319a}), (\ref{BFD319b}) leads to
\begin{eqnarray*}
\begin{pmatrix}{\eta}\\{v_{1}}\\{v_{2}}\end{pmatrix}(t)
=-\int_{0}^{t}\mathcal{U}(t-\tau){F}({\eta}(\tau),{v_{1}}(\tau),{v_{2}}(\tau))d\tau.
\end{eqnarray*}
Then $||\mathcal{U}F||_{X}=||F||_{X}$ and
\begin{eqnarray}
\left\|\begin{pmatrix}{\eta}\\{v_{1}}\\{v_{2}}\end{pmatrix}(t)\right\|_{X}\leq \int_{0}^{t}||{F}({\eta}(\tau),{v_{1}}(\tau),{v_{2}}(\tau))||_{X}d\tau.\label{BFD319c}
\end{eqnarray}
We now estimate the right-hand side of (\ref{BFD319c}). Since $\alpha$ has order zero and using Grisvard's lemma, \cite{Grisvard}
\begin{eqnarray}
||{F}({\eta},{v_{1}},{v_{2}})||_{X}&\lesssim&||\mathcal{F}({\eta},{v_{1}},{v_{2}})||_{X}\lesssim ||(Id-b\Delta)^{-1}\nabla\cdot \widetilde{P}_{N}(A)||_{1}\nonumber\\
&&+||(Id-d\Delta)^{-1}\nabla P_{N}(B)||_{1}\lesssim||A||+||B||.\label{BFD319d}
\end{eqnarray}
We now estimate these two terms. Let $0<t_{N}\leq T$ be the maximal time for which (\ref{BFD311}) holds. Using Grisvard's lemma again, we have
\begin{eqnarray*}
||A||&\lesssim&||{\bf u}||_{1}||\rho||_{1}+||\zeta||_{1}||\sigma||_{1}+||\zeta||_{1}||\xi||_{1}+||\sigma||_{1}||\theta||_{1}+||\rho||_{1}||\xi||_{1}+||\rho||_{1}||\sigma||_{1}\nonumber\\
&&+|\xi|_{\infty}||\theta||_{1}+||{\bf u}||_{1}||\theta||_{1}.
\end{eqnarray*}
Then, from (\ref{BFD310}),  (\ref{BFD311}), and the estimates (\ref{BFD33a}), (\ref{BFD33b}), and using Theorem \ref{TH21} we have, since $\mu>1, N\geq 1$
\begin{eqnarray}
||A||&\lesssim&N^{1-\mu}||\zeta||_{\mu}+N^{1-\mu}||{\bf u}||_{\mu}+||\xi||_{1}\nonumber\\
&&+N^{1-\mu}||\theta||_{1}+N^{1-\mu}N^{-\mu}||{\bf u}||_{\mu}+||\theta||_{1}+N^{1-\mu}||\xi||_{1}\nonumber\\
&\lesssim&N^{1-2\mu}+||\xi||_{1}+||\theta||_{1}\leq N^{-\mu}+||\xi||_{1}+||\theta||_{1}.\label{BFD320a}
\end{eqnarray}
In the case of $B$, by components and using (\ref{BFD311}) we have
\begin{eqnarray*}
||B||\lesssim||{\bf u}||_{1}(||\sigma||_{1}+||\xi||_{1})+||\sigma||_{1}||\xi||_{1}+||\sigma||_{1}^{2}+||\xi||_{1}.
\end{eqnarray*}
Then, from (\ref{BFD311}) and the estimates (\ref{BFD33a}), (\ref{BFD33b})
\begin{eqnarray}
||B||&\lesssim&N^{1-\mu}+||\xi||_{1}+N^{1-\mu}||\xi||_{1}+N^{2-2\mu}\lesssim N^{1-\mu}+||\xi||_{1}.\label{BFD320b}
\end{eqnarray}
Therefore, from (\ref{BFD318d}), (\ref{BFD319c}), (\ref{BFD319d}), (\ref{BFD320a}), (\ref{BFD320b}), and for $0<t\leq t_{N}$
\begin{eqnarray*}
||\theta||_{1}+||\xi||_{1}&\leq&C\int_{0}^{t}(||A||+||B||)d\tau\leq C\int_{0}^{t}(N^{1-\mu}+||\xi||_{1}+||\theta||_{1})d\tau,
\end{eqnarray*}
for some constant $C$. From Gronwall's lemma, it holds that, if $0<t\leq t_{N}$
\begin{eqnarray}
||\theta(t)||_{1}+||\xi(t)||_{1}\leq CN^{1-\mu}.\label{BFD320c}
\end{eqnarray}
Note that, from (\ref{esti1}) and (\ref{BFD320c})
\begin{eqnarray*}
|\xi|_{\infty}\lesssim \sqrt{\log{N}}||\xi||_{1}\lesssim C\sqrt{\log{N}}N^{1-\mu}.
\end{eqnarray*}
For $N\geq 1$ there holds $\log{N}\leq N$ and thus
\begin{eqnarray*}
|\xi|_{\infty}\lesssim N^{3/2-\mu},
\end{eqnarray*}
which implies that $t_{N}$ is not maximal in (\ref{BFD311}) if we take $N$ large enough and $\mu>3/2$. The same argument as that in Proposition \ref{Prop32} finally proves the following result:
\begin{proposition}
\label{Prop33}
Assume that $b,d>0, a,c<0$ and that the solution $(\zeta,{\bf u})$ of (\ref{BFD21a})-(\ref{BFD21c}) satisfies $(\zeta,{\bf u})\in H^{\mu}\times H^{\mu}\times H^{\mu}, \mu>3/2$ for $0\leq t\leq T$. Then, if $N$ is large enough
\begin{eqnarray}
\max_{0\leq t\leq T}\left(||\zeta^{N}-\zeta||_{1}+||{\bf u}^{N}-{\bf u}||_{1}\right)\leq C N^{1-\mu},\label{BFD321}
\end{eqnarray}
where $C$ is a constant dependent on $\zeta_{0}, {\bf u}^{0}$ and $T$, but independent of $N$
\end{proposition}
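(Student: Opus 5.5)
The plan is to assemble the pieces already prepared before the statement into a bootstrap argument, as in Proposition \ref{Prop32}. Split the errors as $\zeta^{N}-\zeta=\theta+\rho$ and ${\bf u}^{N}-{\bf u}=\xi+\sigma$. The projection parts are handled by (\ref{BFD33a}) with $j=1$, which gives $||\rho||_{1}+||\sigma||_{1}\lesssim N^{1-\mu}$ uniformly on $[0,T]$, using the assumed bound on $(\zeta,{\bf u})$ in $H^{\mu}$. It therefore suffices to prove $||\theta(t)||_{1}+||\xi(t)||_{1}\leq CN^{1-\mu}$ on $[0,T]$.

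\textbf{Step 1: the diagonalized system.} Pass to the variables $(\eta,{\bf v})$ of (\ref{BFD317a}). Here it matters that $a,c<0$: for large and small $|{\bf k}|$, the function $g$ behaves like $d_{1}+d_{2}|{\bf k}|^{2}$ (respectively $d_{1}'+d_{2}'|{\bf k}|^{2}$), and $1-c|{\bf k}|^{2}\sim|{\bf k}|^{2}$. Hence $\alpha({\bf k})$ is bounded above and below by positive constants, which gives the norm equivalence (\ref{BFD318d}). Since $\mathcal{B}$ has purely imaginary symbol, it generates a unitary group on $X=H^{1}\times H^{1}\times H^{1}$. Duhamel's formula then gives (\ref{BFD319c}), and combined with (\ref{BFD319d}) we get
\[
||\theta(t)||_{1}+||\xi(t)||_{1}\lesssim\int_{0}^{t}(||A||+||B||)\,d\tau .
\]

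\textbf{Step 2: the nonlinear terms and Gronwall.} Let $t_{N}$ be the maximal time for which (\ref{BFD311}) holds; it is positive because $\xi(0)=0$. On $[0,t_{N}]$ the products in (\ref{BFD35d}) and (\ref{BFD35e}) are bounded using Grisvard's lemma (the $H^{1}\cdot H^{1}\subset L^{2}$ product estimate in 2D), together with (\ref{BFD311}) for the term $\theta\xi$ and (\ref{BFD33a}). This yields (\ref{BFD320a}) and (\ref{BFD320b}), namely $||A||+||B||\lesssim N^{1-\mu}+||\theta||_{1}+||\xi||_{1}$. Gronwall's lemma with zero initial data (\ref{BFD35c}) then gives (\ref{BFD320c}) on $[0,t_{N}]$, with a constant independent of $N$ and $t_{N}$.

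\textbf{Step 3: closing the bootstrap.} I expect this to be the main obstacle. In 2D, $H^{1}$ does not embed into $L^{\infty}$, so a control of $|\xi|_{\infty}$ must come from an inverse inequality on $S_{N}$. I would use the sharp form (\ref{esti1}), $|\xi|_{\infty}\lesssim\sqrt{\log N}\,||\xi||_{1}$. Together with (\ref{BFD320c}) and $\log N\leq N$, this gives $|\xi|_{\infty}\lesssim N^{3/2-\mu}$. This is where $\mu>3/2$ is needed: it makes the bound smaller than $1$ for $N$ large. Then $|\xi|_{\infty}<1$ at $t_{N}$, so by continuity $t_{N}$ cannot be maximal unless $t_{N}=T$. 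The semidiscrete solution also cannot blow up before then, since its $H^{1}$ norm stays bounded. Continuing up to $T$ extends (\ref{BFD320c}) to $[0,T]$, and the triangle inequality with the projection bounds gives (\ref{BFD321}).

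One subtlety needs checking: the constant in Gronwall must not depend on $t_{N}$. This holds because every bound used is uniform on $[0,T]$, with constants coming only from $\sup_{t\leq T}||(\zeta,{\bf u})||_{\mu}$.
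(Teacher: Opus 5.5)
Your proposal is correct and follows the paper's proof essentially step by step. The steps match: the diagonalization through $P({\bf k})$ with $\alpha({\bf k})$ of order zero giving (\ref{BFD318d}), Duhamel's formula with the unitary group on $H^{1}\times H^{1}\times H^{1}$, the bounds (\ref{BFD320a})--(\ref{BFD320b}) on $[0,t_{N}]$, Gronwall, and the continuation argument closed with (\ref{esti1}) and $\log N\leq N$. Your explicit check that the finite-dimensional semidiscrete system cannot blow up before $T$ is a useful addition. Also, $\sqrt{\log N}\,N^{1-\mu}\to 0$ already for $\mu>1$, so the restriction $\mu>3/2$ comes from the crude step $\log N\leq N$, not from any real need in the argument.
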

Note that the estimate (\ref{BFD321}) is optimal for $\zeta$ and ${\bf u}$ in the $H^{1}$ norm.
\subsubsection{Group (G3): Other B-FD systems}
We briefly discuss other B-FD systems in group (G3).
\begin{itemize}
\item[(i)] $b,d>0, a=0, c<0$. In this case, $\alpha({\bf k})$ is still of order zero and then (\ref{BFD318d}) holds. So we can use the same proof of Proposition \ref{Prop33} to have (\ref{BFD321}).
\item[(ii)] $b,d>0, a<0, c=0$. In this case, $\alpha({\bf k})$ is of order one with $\alpha({\bf k})\neq 0, {\bf k}\in\mathbb{R}^{2}$. Then $(\theta,\xi)\in H^{s-1}\times H^{s}\times H^{s}$ iff $(\eta,{\bf v})\in H^{s}\times H^{s}\times H^{s}$, and (\ref{BFD318d}) changes to
\begin{eqnarray*}
C_{1}\left(||\eta||_{s}+||{\bf v}||_{s}\right)\leq ||\theta||_{s-1}+||\xi||_{s}\leq C_{2}\left(||\eta||_{s}+||{\bf v}||_{s}\right).
\end{eqnarray*}
Then we can apply the proof of Proposition \ref{Prop33} and the change (\ref{BFD317a}) leads, instead of (\ref{BFD321}), to the estimate
\begin{eqnarray*}
\max_{0\leq t\leq T}\left(||\zeta^{N}-\zeta||+||{\bf u}^{N}-{\bf u}||_{1}\right)\leq C N^{1-\mu},
\end{eqnarray*}
which is not optimal for $\zeta$ in the $L^{2}$ norm.
\end{itemize}
\section{Full discretization and numerical experiments}
\label{sec4}
In this section we will introduce the fully discrete scheme to approximate (\ref{BFD21a})-(\ref{BFD21c}) and illustrate its performance with some numerical experiments.
\subsection{Full discretization}
\label{sec41}
The periodic ivp (\ref{BFD21a})-(\ref{BFD21c}) is discretized in space with a Fourier collocation method based on a uniform grid $(x_{l},y_{m})$ with
\begin{eqnarray}
x_{l}=lh,\quad y_{m}=mh,\quad l,m=0,\ldots,N-1,\label{BFD41}
\end{eqnarray}
for some integer $N>1$, with $h=2\pi/N$. Then the semidiscrete solution is defined as a map $t\mapsto (\zeta^{N}(t),{\bf u}^{N}(t))\in S_{N}\times S_{N}\times S_{N}, t\geq 0$, satisfying (\ref{BFD21a})-(\ref{BFD21c}) at the collocation points. Written in a nodal basis of $S_{N}$ based on (\ref{BFD41}), the numerical solution is represented by the matrix values
\begin{eqnarray*}
W^{N}(t)=\begin{pmatrix}Z^{N}(t)\\{\bf U}^{N}(t)\end{pmatrix}=\begin{pmatrix}\zeta^{N}(x_{j},y_{k},t)\\u_{1}^{N}(x_{j},y_{k},t)\\u_{2}^{N}(x_{j},y_{k},t)\end{pmatrix}_{j,k=0}^{N-1},
\end{eqnarray*}
that will satisfy an ode system 
of the form
\begin{eqnarray}
(I_{N}-b\Delta_{N})\frac{d}{dt}Z^{N}+\frac{1}{\gamma}\nabla_{N}\cdot{\bf U}^{N}-\frac{1}{\gamma}\nabla_{N}\cdot(Z^{N}.{\bf U}^{N})&&\nonumber\\
-\frac{1}{\gamma^{2}}\widetilde{S}_{1}^{N}\cdot{\bf U}^{N}+\frac{1}{\gamma}\widetilde{S}_{2}^{N}(a)\cdot {\bf U}^{N}&=&0,\label{BFD43a}\\
(I_{N}-d\Delta_{N})\frac{d}{dt}{\bf U}^{N}+(1-\gamma)(I_{N}+c\Delta_{N})\nabla_{N}Z^{N}-\frac{1}{2\gamma}\nabla_{N}|{\bf U}^{N}|^{2}&=&0,\label{BFD43b}
\end{eqnarray}
where if $D_{N,j}=(D_{N,j}(l,m))_{l,m=0}^{N-1}$ stands for the $N\times N$ Fourier pseudospectral differentiation matrix in the $j$ direction, $j=x,y$, we define 
\begin{eqnarray*}
\Delta_{N}=D_{N,x}^{2}+D_{N,y}^{2},&& \nabla_{N}=\begin{pmatrix}D_{N,x}\\D_{N,y}\end{pmatrix},\\
S_{1}^{N}=|D_{N}|{\rm coth}|D_{N}|,&&S_{2}^{N}={\rm coth}^{2}|D_{N}|,\; D_{N}=(-\Delta_{N})^{1/2},\\
\widetilde{S}_{1}^{N}=S_{1}^{N}\nabla_{N},&&\widetilde{S}_{2}(a)^{N}=(aI_{N}-\frac{1}{\gamma^{2}})\Delta_{N}\nabla_{N},
\end{eqnarray*}
with $S_{1}^{N}$ denoting the matrix operator represented in space by the matrix symbol $\tau_{1}(l,j), , l,j=0,\ldots,N-1$, and $\tau_{1}$ given in Table \ref{BFD_t2}. $I_{N}$ is the $N\times N$ identity matrix. The nonlinear terms are defined componentwise, denoted by the dot:  if ${\bf U}^{N}=\begin{pmatrix}U_{1}^{N}\\ U_{2}^{N}\end{pmatrix}$ then
\begin{eqnarray*}
Z^{N}.{\bf U}^{N}=\begin{pmatrix}Z^{N}.U_{1}^{N}\\Z^{N}.U_{2}^{N}\end{pmatrix},\;
|{\bf U}^{N}|^{2}=|U_{1}^{N}|^{2}+|U_{2}^{N}|^{2},
\end{eqnarray*}
where
\begin{eqnarray*}
|U_{l}^{N}(t)|^{2}=(|U_{l}^{N}(i,j,t)|^{2})_{i,j=0}^{N-1}, l=1,2,\; t\geq 0.
\end{eqnarray*}
Then (\ref{BFD43a}), (\ref{BFD43b}) can be written as
\begin{eqnarray}
\frac{d}{dt}W^{N}=f(W^{N}),\label{BFD42}
\end{eqnarray} 
where
\begin{eqnarray*}
f(W^{N})=-\begin{pmatrix} (I_{N}-b\Delta_{N})^{-1}\left(\frac{1}{\gamma}\nabla_{N}\cdot{\bf U}^{N}-\frac{1}{\gamma}\nabla_{N}\cdot(Z^{N}.{\bf U}^{N})-\frac{1}{\gamma^{2}}\widetilde{S}_{1}^{N}\cdot{\bf U}^{N}+\frac{1}{\gamma}\widetilde{S}_{2}^{N}(a)\cdot {\bf U}^{N}\right)\\ (I_{N}-d\Delta_{N})^{-1}\left((1-\gamma)(I_{N}+c\Delta_{N})\nabla_{N}Z^{N}-\frac{1}{2\gamma}\nabla_{N}|{\bf U}^{N}|^{2}\right)
 \end{pmatrix},\label{BFD42b}
\end{eqnarray*}
for $W=(Z,{\bf U})^{T}=(Z,U_{1},U_{2})^{T}$.
The algebraic equivalence, in the sense considered in e.~g. \cite{CHQZ}, between the collocation and Galerkin methods allows to expect similar convergence results to those of Propositions \ref{Prop32} and \ref{Prop33} for (\ref{BFD43a}), (\ref{BFD43b}).

In order to complete the full discretization, the semidiscrete system (\ref{BFD42}) is approximated at a discrete grid $t_{n}=n\Delta t, n=0,1,\ldots$, by the Implicit Midpoint Rule
\begin{eqnarray}
W^{n+1/2}=W^{n}+\frac{\Delta t}{2}f(W^{n+1/2}),\quad W^{n+1}=2W^{n+1/2}-W^{n},\label{BFD44}
\end{eqnarray}
where $W^{n}$ denotes the approximation to $W^{N}(t_{n}), n=0,1,\ldots$ The choice of (\ref{BFD44}) is determined by the well-known stability and geometric properties, which validate it as a suitable time integrator for long term simulations. Note that (\ref{BFD44}) can be written in the form
\begin{eqnarray*}
W^{n+1/2}&=&G_{n}(W^{n+1/2})=(I_{3N}+\frac{\Delta t}{2}A)^{-1}\Big(W^{n}\\
&&+\frac{\Delta t}{2}L_{N}\begin{pmatrix}0&&\nabla_{N}\cdot\\\nabla_{N}&0\end{pmatrix}\begin{pmatrix}\frac{1}{2\gamma}|{\bf U}^{n+1/2}|^{2}\\\frac{1}{\gamma}Z^{n+1/2}.{\bf U}^{n+1/2}\end{pmatrix}\Big),\\
L_{N}&=&\begin{pmatrix}(I_{N}-b\Delta_{N})^{-1}&0&0\\0&(I_{N}-d\Delta_{N})^{-1}&0\\0&0&(I_{N}-d\Delta_{N})^{-1}\end{pmatrix},
\end{eqnarray*}
where $A$ is the matrix operator with symbol $i|{\bf k}|\mathcal{A}({\bf k})$, and $\mathcal{A}$ is given by (\ref{BFD24b}). 
The full discretization is implemented fro the 2D discrete Fourier coefficients of $W^{N}$, leading to, for each ${\bf k}=(k_{x},k_{y}), 0\leq k_{x},k_{y}\leq N-1$, cf. \cite{D2026}
\begin{eqnarray}
B({\bf k})\widehat{W^{n+1/2}}({\bf k})&=&\widehat{W^{n}}({\bf k})-i\frac{\Delta t}{2}G(\widehat{W^{n+1/2}})({\bf k}),\label{BFD45a}\\
\widehat{W^{n+1}}({\bf k})&=&2\widehat{W^{n+1/2}}({\bf k})-\widehat{W^{n}}({\bf k}),\label{BFD45b}
\end{eqnarray}
where, if $W=(Z,{\bf U})^{T}=(Z,U_{1},U_{2})^{T}$,  
\begin{eqnarray*}
B({\bf k})&=&I_{3}+i|{\bf k}|\frac{\Delta t}{2}\mathcal{A}({\bf k}),\\
G(W)_{l,m}({\bf k})&=&\begin{pmatrix}\frac{1}{\gamma}\frac{k_{x}\widehat{(Z(l,m) U_{1}(l,m))}({\bf k})+k_{y}\widehat{(Z(l,m) U_{2}(l,m))}({\bf k})}{1+b|{\bf k}|^{2}}\\
\frac{1}{2\gamma}\frac{k_{x}}{1+d|{\bf k}|^{2}}\widehat{|{\bf U}(l,m)|^{2}}({\bf k})\\
\frac{1}{2\gamma}\frac{k_{y}}{1+d|{\bf k}|^{2}}\widehat{|{\bf U}(l,m)|^{2}}({\bf k})\end{pmatrix}.
\end{eqnarray*}
According to the previous analysis, the eigenvalues of $B({\bf k})$ are
\begin{eqnarray*}
\{1,1\pm i|{\bf k}|\frac{\Delta t}{2}\sigma({\bf k})\},
\end{eqnarray*}
with $\sigma({\bf k})$ given by (\ref{sigma}). The properties of $g({\bf k})$ in (\ref{BFD1d}), mentioned above, imply that $\sigma$ has order zero when $c<0$ and $-1$ when $c=0$. In any case, the operator $(I_{3N}+\frac{\Delta t}{2}A)^{-1}$ has norm $\leq 1$. Assume now that $W^{n}$ exists and $||W^{n}||_{N}<R$ for some $R>0$ and where $\|\cdot\|$ stands for the Euclidean norm in $\mathbb{R}^{3N}$. Using the quadratic character of the nonlinearities,  we have, for $W_{1}, W_{2}$ such that $||W_{j}||_{N}\leq R, j=1,2$,
\begin{eqnarray}
||G_{n}(W_{1})-G_{n}(W_{2})||_{N}&\leq& C\frac{\Delta t}{2}\left(|W_{1}|_{\infty}^{2}+|W_{2}|_{\infty}^{2}\right)^{1/2}||W_{1}-W_{2}||_{N}\nonumber\\
&\leq& C(R)\Delta t ||W_{1}-W_{2}||_{N},\label{BFD45c}
\end{eqnarray}
for some constant $C=C(R)$ depending on $R$. In addition
\begin{eqnarray}
||G_{n}(0)||_{N}=||(I_{3N}+\frac{\Delta t}{2}A)^{-1}W_{n}||_{N}\leq ||W_{n}||_{N}<R.\label{BFD45d}
\end{eqnarray}
Thus, from (\ref{BFD45c}) and (\ref{BFD45d}) we have that $G_{n}$ applies the ball of radius $R$ into itself and is contractive for $\Delta t$ small enough. This implies the existence of a solution $W^{*}=W^{n+1/2}$ of (\ref{BFD44}) with $||W^{*}||_{N}\leq R$.

 On the other hand, the system in (\ref{BFD44}) is iteratively solved by using the classical fixed point algorithm, implemented from (\ref{BFD45a}), (\ref{BFD45b}) in the $\nu$th iteration 
$\nu\mapsto \nu+1$  as
\begin{eqnarray}
\widehat{W^{[0]}}({\bf k})&=&\widehat{W^{n}}({\bf k}),\nonumber\\
B({\bf k})\widehat{W^{[\nu+1]}}({\bf k})&=&\widehat{W^{n}}({\bf k})-i\frac{\Delta t}{2}G(\widehat{W^{[\nu]}})({\bf k}),\label{BFD45e}
\end{eqnarray}
for each ${\bf k}=(k_{x},k_{y}), 0\leq k_{x},k_{y}\leq N-1$. Similar arguments can be applied to prove the convergence of the iteration (\ref{BFD45e}) when $\Delta t$ is sufficiently small and $W^{n}$ is known and bounded in the Euclidean norm.
\subsection{Numerical experiments}
In order to check the performance of the numerical scheme, two examples are considered in this section. In the first one, we approximate the periodic ivp with $\gamma=0.5, a=c=-1/6, b=1/6, d=1/2$ for which
\begin{eqnarray}
\zeta(x,y,t)&=&e^{t}\sin(x)\cos(y),\nonumber\\
u_{1}(x,y,t)&=&e^{t}\cos(x)\sin(y),\nonumber\\
u_{2}(x,y,t)&=&e^{t}\sin(x)\cos(y),\quad x,y\in (0,2\pi),\label{BFD46}
\end{eqnarray}
is solution, adding an appropriate source term. We compute the errors
\begin{eqnarray*}
E_{2}(f)=||f_{N}(\cdot,T)-f(\cdot,T)||,\quad E_{\infty}(f)=|f_{N}(\cdot,T)-f(\cdot,T)|_{\infty},
\end{eqnarray*}
at $T=1$, for each of the functions $f=\zeta, u_{1}, u_{2}$ and approximations $f_{N}$ with $N=64$ (spatial stepsize $h=2\pi/N$). They are displayed in Tables \ref{BFD_t3} and \ref{BFD_t4}, for several time stepsizes $\Delta t$ and for $\zeta$ and $u_{1}$ components. (The results for $u_{2}$ are similar to those of $u_{1}$ and will not be shown here.) For the values of $\Delta t$ considered, the fixed point iteration (\ref{BFD45e}) converge; due to the regularity of the solution (\ref{BFD46}), spectral order of convergence in space is expected and, as shown in the tables, the second order of convergence in time is observed.
\begin{table}[htbp]
\begin{tabular}{c|c|c|c|c|}
$\Delta t$& $E_{2}(\zeta)$&Rate&$E_{2}(u_{1})$&Rate\\
\hline
$5\times 10^{-2}$&$6.0788\times 10^{-3}$&&$4.8431\times 10^{-4}$&\\
$2.5\times 10^{-2}$&$1.6290\times 10^{-3}$&$1.900$&$1.1566\times 10^{-4}$&$2.066$\\
$1.25\times 10^{-2}$&$4.1135\times 10^{-4}$&$1.986$&$2.8562\times 10^{-5}$&$2.018$\\
$6.25\times 10^{-3}$&$1.0305\times 10^{-4}$&$1.997$&$7.1183\times 10^{-6}$&$2.005$\\
$3.125\times 10^{-3}$&$2.5774\times 10^{-5}$&$1.999$&$1.7782\times 10^{-6}$&$2.001$\\
$1.5625\times 10^{-3}$&$6.4442\times 10^{-6}$&$2.000$&$4.4446\times 10^{-7}$&$2.000$\\
\hline
\end{tabular}
\caption{$L^{2}$ errors at $T=1$ and temporal convergence rates for the method (\ref{BFD43a})-(\ref{BFD44}) with respect to (\ref{BFD46}) with $N=64$, $a=c=-1/6, b=1/6, d=1/2$.\label{BFD_t3}}
\end{table}

\begin{table}[htbp]
\begin{tabular}{c|c|c|c|c|}
$\Delta t$& $E_{\infty}(\zeta)$&Rate&$E_{\infty}(u_{1})$&Rate\\
\hline
$5\times 10^{-2}$&$2.4253\times 10^{-2}$&&$9.6894\times 10^{-4}$&\\
$2.5\times 10^{-2}$&$6.5371\times 10^{-3}$&$1.891$&$2.3212\times 10^{-4}$&$2.062$\\
$1.25\times 10^{-2}$&$1.6535\times 10^{-3}$&$1.983$&$5.7396\times 10^{-5}$&$2.016$\\
$6.25\times 10^{-3}$&$4.1440\times 10^{-4}$&$1.997$&$1.4309\times 10^{-5}$&$2.004$\\
$3.125\times 10^{-3}$&$1.0366\times 10^{-4}$&$1.999$&$3.5748\times 10^{-6}$&$2.001$\\
$1.5625\times 10^{-3}$&$2.5919\times 10^{-5}$&$2.000$&$8.9354\times 10^{-7}$&$2.000$\\
\hline
\end{tabular}
\caption{$L^{\infty}$ errors at $T=1$ and temporal convergence rates for the method (\ref{BFD43a})-(\ref{BFD44}) with respect to (\ref{BFD46}) with $N=64$, $a=c=-1/6, b=1/6, d=1/2$.\label{BFD_t4}}
\end{table}
In the second example, we approximate line solitary wave solutions of (\ref{BFD1a}), (\ref{BFD1b}) of speed $c_{s}\neq 0$ of the form
\begin{eqnarray}
\zeta(x,y,t)&=&\zeta(x-c_{s}t-x_{0}),\nonumber\\
u_{1}(x,y,t)&=&0,\nonumber\\
u{2}(x,y,t)&=&u(x-c_{s}t-x_{0}),\label{BFD47}
\end{eqnarray}
with some $x_{0}\in\mathbb{R}$ and where $\zeta(x-c_{s}t-x_{0}), u(x-c_{s}t-x_{0})$ are therefore solitary wave solutions of the corresponding 1D B-FD systems, \cite{AnguloS2019}. The profiles $\zeta(X), u(X), X=x-c_{s}t-x_{0}$ are numerically generated by the procedure described in \cite{DDS3}. The ivp is first approximated by the periodic ivp on a long enough interval $\Omega_{L}=(-L,L)^{2}, L>0$, \cite{BChen}, and this is integrated numerically by the method proposed in this paper (involving a change of variable from $\Omega_{L}$ to $\Omega=(0,2\pi)$). For the B-FD system with $\gamma=0.8, x_{0}=-10, c_{s}=0.5$, $a=c=0, b=d=1/6$, and using the approximate profiles $\zeta(X), u(X)$ as initial condition, the code is run up to $T=40$ and some numerical results are reported.

We consider $L=32, N=256$ so that the space step sizes are $h_{x}=h_{y}=\frac{2L}{N}=0.25$. The $L^{2}$ and $L^{\infty}$ errors at $T=40$, with corresponding rates, for the $\zeta$ and $u_{2}$ components and several time step sizes, are displayed in Tables \ref{BFD_t5} and \ref{BFD_t5}, respectively. As in the previous experiment, the iteration (\ref{BFD45e}) was convergent for the values of $\Delta t$ considered. In addition, no further stability conditions were observed in the experiments.

\begin{table}[htbp]
\begin{tabular}{c|c|c|c|c|}
$\Delta t$& $E_{2}(\zeta)$&Rate&$E_{2}(u_{2})$&Rate\\
\hline
$5\times 10^{-2}$&$1.6051\times 10^{-3}$&&$4.6179\times 10^{-4}$&\\
$2.5\times 10^{-2}$&$4.0118\times 10^{-4}$&$2.0004$&$1.1542\times 10^{-4}$&$2.0003$\\
$1.25\times 10^{-2}$&$1.0029\times 10^{-4}$&$2.0001$&$2.8854\times 10^{-5}$&$2.0001$\\
$6.25\times 10^{-3}$&$2.5071\times 10^{-5}$&$2.0000$&$7.2133\times 10^{-6}$&$2.0000$\\
\hline
\end{tabular}
\caption{$L^{2}$ errors at $T=40$ and temporal convergence rates for the method (\ref{BFD43a})-(\ref{BFD44}) with respect to (\ref{BFD47}) in $\Omega_{32}$, with $N=256$, $a=c=0, b=d=1/6$.\label{BFD_t5}}
\end{table}

\begin{table}[htbp]
\begin{tabular}{c|c|c|c|c|}
$\Delta t$& $E_{\infty}(\zeta)$&Rate&$E_{\infty}(u_{2})$&Rate\\
\hline
$5\times 10^{-2}$&$1.2789\times 10^{-4}$&&$3.5570\times 10^{-5}$&\\
$2.5\times 10^{-2}$&$3.1845\times 10^{-5}$&$2.0057$&$8.8933\times 10^{-6}$&$1.9999$\\
$1.25\times 10^{-2}$&$7.9531\times 10^{-6}$&$2.0015$&$2.2233\times 10^{-6}$&$2.0000$\\
$6.25\times 10^{-3}$&$1.9878\times 10^{-6}$&$2.0004$&$5.5583\times 10^{-7}$&$2.0000$\\
\hline
\end{tabular}
\caption{$L^{\infty}$ errors at $T=40$ and temporal convergence rates for the method (\ref{BFD43a})-(\ref{BFD44})with respect to (\ref{BFD47}) in $\Omega_{32}$, with $N=256$, $a=c=0, b=d=1/6$.\label{BFD_t6}}
\end{table}

The evolution of the $\zeta$ component of the numerical solution is illustrated in Figure \ref{BFD_fig1}. The characteristic propagation as a traveling wave with permanent profile and constant speed is observed.
\begin{figure}[htbp]
\centering
\subfigure
{\includegraphics[width=0.45\columnwidth]{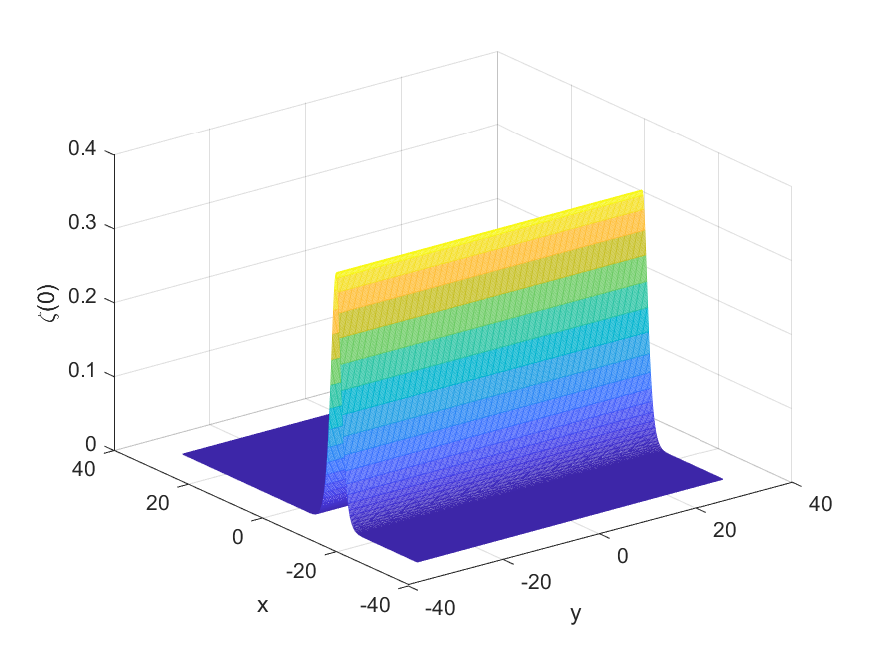}}
\subfigure
{\includegraphics[width=0.45\columnwidth]{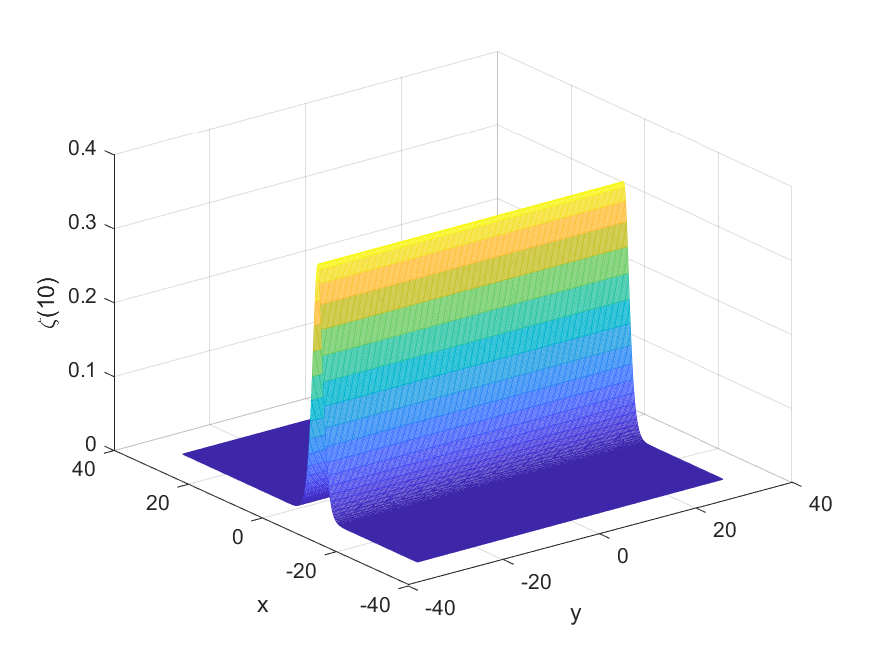}}
\subfigure
{\includegraphics[width=0.45\columnwidth]{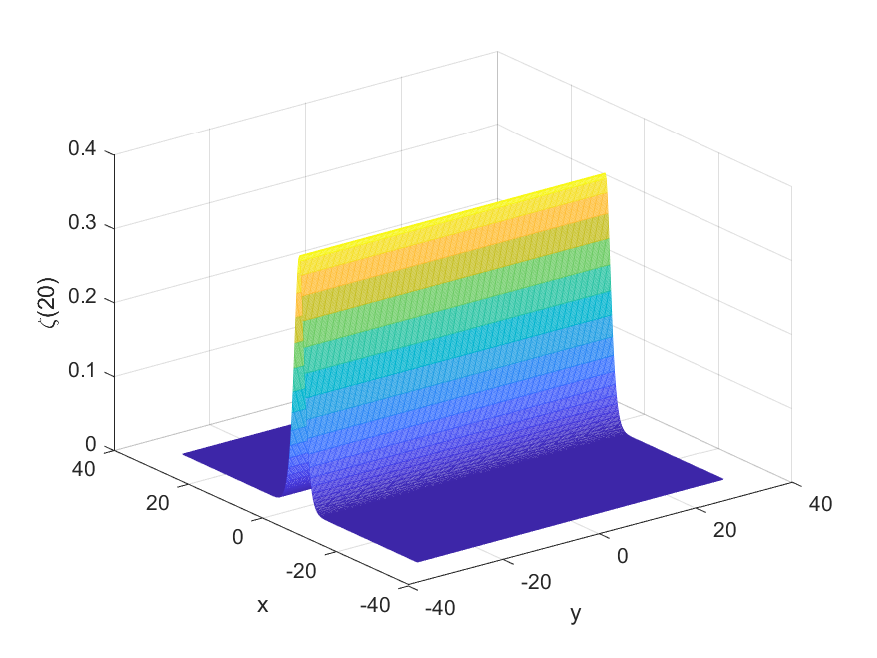}}
\subfigure
{\includegraphics[width=0.45\columnwidth]{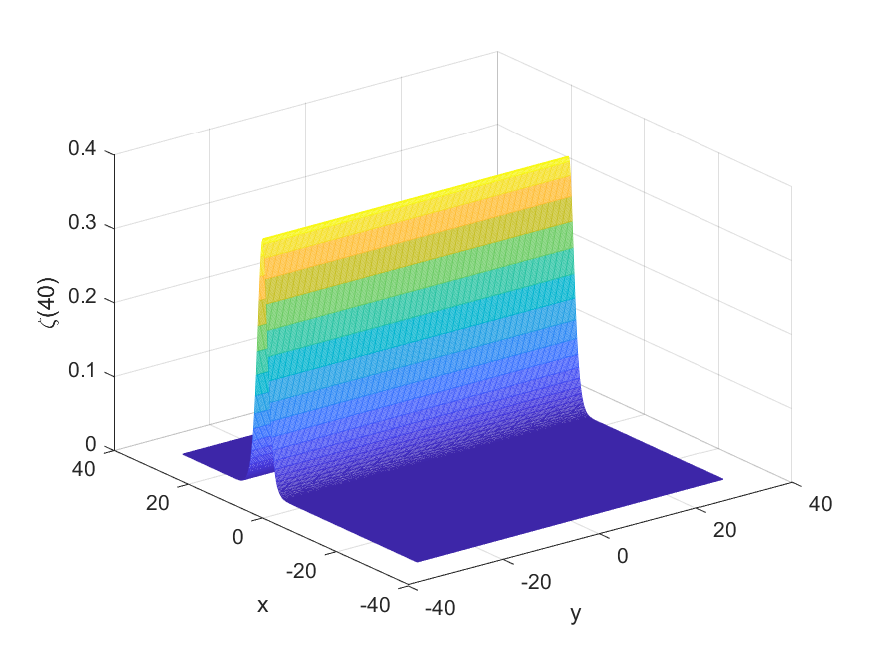}}
\caption{$\zeta$ component of the numerical solution at times $t=0,10,20,40$.}
\label{BFD_fig1}
\end{figure}
Figure \ref{BFD_fig2}(a) shows the corresponding cross sections at $y=0$, confirming the evolution of the 1D profile as solitary wave with speed $c_{s}$.

Some other results can measure the accuracy of the fully discrete scheme and are mentioned here by way of illustration. The time behaviour of the error in the amplitude of the $\zeta$ component is shown, in doble logarithmic scale, in Figure \ref{BFD_fig2}(b). Observe that after some growth up to approximately $t=10$, the errors tend to stabilize to a constant in time evolution for longer times. A similar behaviour is suggested in Figure \ref{BFD_fig2}(c), which shows the evolution of the discrete version of the Hamiltonian (\ref{BFD25}), computed using the spectral discretization as
\begin{eqnarray*}
H_{N}^{n}&=&\frac{h_{x}h_{y}}{2}\sum_{j,k}\left((1-\gamma)|Z_{j,k}^{n}|^{2}+\frac{1}{\gamma}|{\bf U}^{n}_{j,k}|^{2}\right.\\
&&\left.-\frac{1}{\gamma}Z_{j,k}^{n}|{\bf U}^{n}_{j,k}|^{2}-c(1-\gamma)|(\nabla_{N}Z^{n})_{j,k}|^{2}-\frac{a}{\gamma}\left((\nabla_{N}U_{1}^{n})_{j,k}^{2}+(\nabla_{N}U_{2}^{n})_{j,k}^{2}\right)\right.\\
&&\left.-\frac{1}{\gamma^{2}}\left((U_{1}^{n})_{j,k}(S_{1}^{N}U_{1}^{n})_{j,k}+(U_{2}^{n})_{j,k}(S_{1}^{N}U_{2}^{n})_{j,k}\right)-\frac{1}{\gamma^{3}}|(S_{1}^{N}{\bf U}^{n})|^{2}\right).
\end{eqnarray*}

\begin{figure}[htbp]
\centering
\subfigure
{\includegraphics[width=0.45\columnwidth]{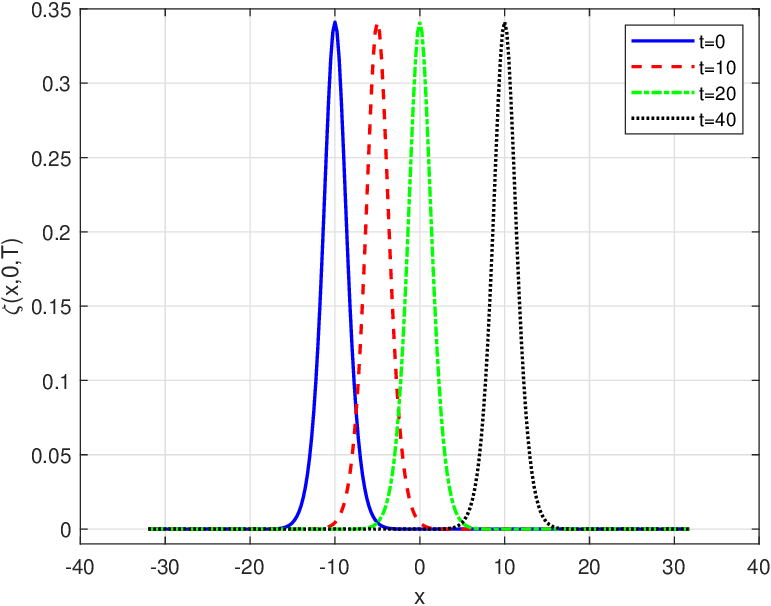}}
\subfigure
{\includegraphics[width=0.45\columnwidth]{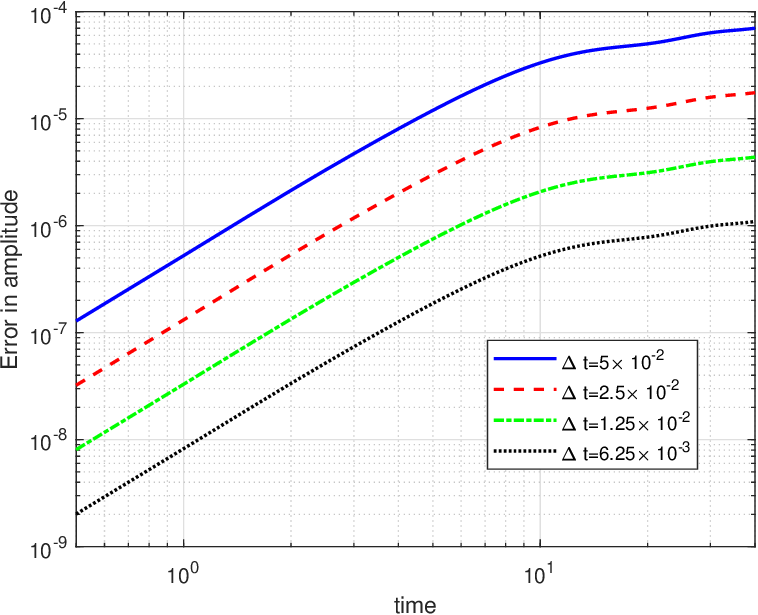}}
\subfigure
{\includegraphics[width=0.45\columnwidth]{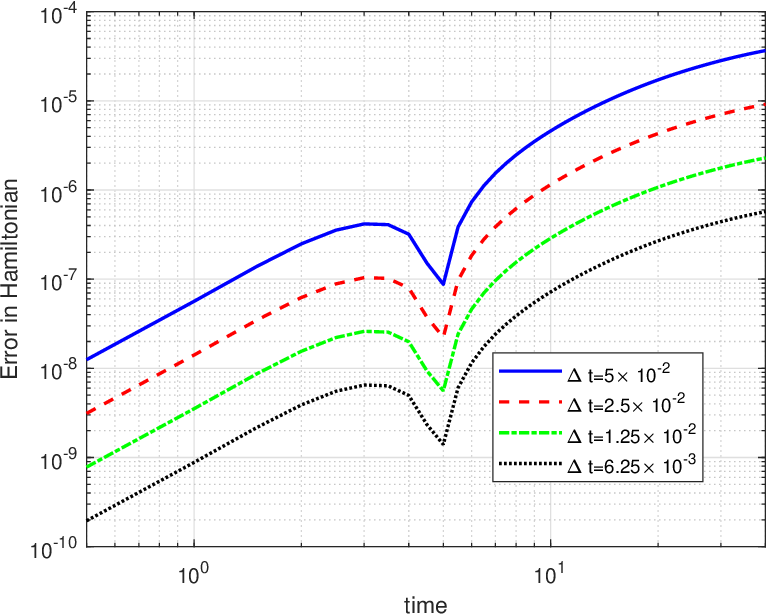}}
\caption{(a) $\zeta$ component of the numerical solution at times $t=0,10,20,40$ and $y=0$; (b) Time behaviour of the error in the amplitude for several time steps in loglog scale; (c) Time behaviour of the error in the Hamiltonian for several time steps in loglog scale.}
\label{BFD_fig2}
\end{figure}

\section*{Acknowledgments}
This research has been supported by Ministerio de Ciencia e Innovaci\'on project PID2023-147073NB-I00, {by Junta de Castilla y Le\'on under Project VA115P25 and by Department of Education of the Junta de Castilla y Le\'on and FEDER Funds under project CLU-2025-1-02- IMUVA}.

\end{document}